\documentclass[12pt]{article}
\usepackage{amsthm}
\usepackage{amsmath}
\usepackage{amsfonts}
\usepackage{graphicx}
\usepackage{latexsym}
\usepackage{amssymb}

\textheight      9.00in
\textwidth       6.7in

\oddsidemargin   -0.13in
\evensidemargin  0.00in

\topmargin      -0.65in
\topskip         0.00in
\footskip        0.40in

\newcommand{\CMRD}{\C^{\textmd{MRD}}}

\newcommand{\deff}{\mbox{$\stackrel{\rm def}{=}$}}
\newcommand{\Span}[1]{{\left\langle {#1} \right\rangle}}

\newcommand{\field}[1]{\mathbb{#1}}
\newcommand{\B}{\field{B}}
\newcommand{\C}{\field{C}}

\newcommand{\F}{\field{F}}
\newcommand{\V}{\field{V}}

\newcommand{\dS}{\field{S}}
\newcommand{\dT}{\field{T}}

\newcommand{\cC}{{\cal C}}

\newcommand{\cF}{{\cal F}}

\newcommand{\cG}{{\cal G}}

\newtheorem{theorem}{Theorem}
\newtheorem{lemma}{Lemma}
\newtheorem{remark}{Remark}
\newtheorem{cor}{Corollary}
\newtheorem{example}{Example}

\begin{document}

\bibliographystyle{plain}

\title{
\begin{center}
Partial $k$-Parallelisms in Finite Projective Spaces
\end{center}
}
\author{
{\sc Tuvi Etzion}\thanks{Department of Computer Science, Technion,
Haifa 32000, Israel, e-mail: {\tt etzion@cs.technion.ac.il}.} }

\maketitle

\begin{abstract}
In this paper we consider the following question. What is the maximum number of
pairwise disjoint $k$-spreads which exist in PG($n,q$)?
We prove that if $k+1$ divides $n+1$ and $n>k$ then
there exist at least two disjoint $k$-spreads in PG($n,q$)
and there exist at least
$2^{k+1}-1$ pairwise disjoint $k$-spreads in PG($n,2$).
We also extend the known results on parallelism in a projective
geometry from which the points of a given subspace were removed.
\end{abstract}

\vspace{0.5cm}

\noindent {\bf Keywords:} Grassmannian, lifted MRD codes, parallelism, projective geometry,
spreads, subspace transversal design.

\footnotetext[1] { This research was supported in part by the Israeli
Science Foundation (ISF), Jerusalem, Israel, under
Grant 10/12.}

\newpage
\section{Introduction}
\label{sec:introduction}

A $k$-spread in the $n$-dimensional projective space of finite
order $q$, namely PG($n,q$) is a set $\dS$ of $k$-dimensional subspaces
(henceforth called $k$-subspaces) in which
each point of PG($n,q$) is contained in exactly one element of $\dS$.
A necessary and sufficient condition that
a $k$-spread exists in PG($n,q$) is that
$k+1$ divides $n+1$. The size of a $k$-spread
in PG($n,q$) is $\frac{q^{n+1}-1}{q^{k+1}-1}$.
$k$-spreads were extensively studied since they
have many applications in projective geometry, e.g.~\cite{BeUe91,Lun99}.

Parallelism is a well known concept in combinatorial designs.
A \emph{parallel class} in a block design, is a set of blocks which
partition the set of points of the design.
Spreads are also a type of combinatorial design on which a parallelism
can be defined~\cite{Joh10}.
A $k$-spread is called a parallel class as it partitions
the set of all the points of PG($n,q$). A $k$-parallelism in
PG($n,q$) is a partition of the $k$-subspaces
of PG($n,q$) into pairwise disjoint $k$-spreads. Some 1-parallelisms
of PG($n,q$) are known for many years. For $q=2$ and
odd $n$ there is a 1-parallelism in PG($n,2$). Such a parallelism
was found in the context of Preparata codes and it is known that
many such parallelisms exist~\cite{Bak76,BLW}. For any other power
of a prime $q$, if $n=2^i-1$, $i \geq 2$, then
a 1-parallelism was shown in~\cite{Beu74}.
In the last forty years no new parameters for
1-parallelisms were shown until recently, when
a 1-parallelism in PG(5,3) was proved to exist in~\cite{EtVa12}.
A $k$-parallelism for $k > 1$ was not known until a 2-parallelism
in PG($5,2$) was found by~\cite{Sar02}.

The difficulty to find new parameters for 1-parallelisms and $k$-parallelisms
motivates the following question. What is the maximum number
of pairwise disjoint $k$-spreads that exist in PG($n,q$)?
Beutelspacher~\cite{Beu90} has proved that if $n$ is odd then there exist
$q^{2 \lfloor \log n \rfloor} + \cdots +q+1$ pairwise disjoint 1-spreads in PG($n,q$).
In general we don't have a proof for the following most simple question.
Given $q$, $n$, and $k$, such that $k+1$ divides $n+1$
and $n>k$, do there exist two disjoint
$k$-spreads in PG($n,q$)? In this paper we will give a positive
answer for this question. Moreover, we will prove that there exist
at least $2^{k+1}-1$ pairwise disjoint $k$-spreads in PG($n,2$)
if $k+1$ divides $n+1$ and $n>k$.

One of the main tools for our constructions will come from coding theory.
It will based on error-correcting codes in the Grassmannian space
which are constructed by lifting matrices of error-correcting codes
in the rank-metric. This method is well documented, e.g.~\cite{EtSi09,EtSi13,SKK08}.
The interest in such construction came as result of
a new application of such codes
in random network coding~\cite{KK}.

The rest of this paper is organized as follows.
In Section~\ref{sec:conf} we will present the two
equivalent ways to handle subspaces, in projective geometry
and in the Grassmannian. We will explain the method which transfers
matrices into subspaces and rank-metric codes into Grassmannian codes.
We will present some basic results and connect them into the
theory of projective geometry in general and the theory of spreads
in particular.
In Section~\ref{sec:n=2k} we present a construction which produces
$2^{k+1}-1$ disjoint $k$-spreads in PG($2k+1,2$). In Section~\ref{sec:any_q}
we prove that for a general $q$ there exist at least two disjoint
$k$-spreads in PG($2k+1,q$). In Section~\ref{sec:recursive}
we present a recursive construction to obtain two disjoint $k$-spreads
in PG($n,q$) if $k+1$ divides $n+1$ and $n>k$; and $2^{k+1}-1$ pairwise disjoint
$k$-spreads in PG($n,2$) if $k+1$ divides $n+1$ and $n>k$.
The construction will be based on a design called subspace
transversal design which will be defined.
It will lead to parallelisms in partial sets of PG($n,q$)
which will be used in the recursive construction.
Conclusions and problems for future research are given
in Section~\ref{sec:conclusion}.

\section{Representation of subspaces, codes, and spreads}
\label{sec:conf}

The projective geometry PG($n,q$) consists of $\frac{q^{n+1}-1}{q-1}$
points and $\frac{(q^{n+1}-1)(q^n-1)}{(q^2-1)(q-1)}$ lines.
The points are represented by a set of
nonzero elements from $\F_q^{n+1}$, of maximum size,
in which each two elements are linearly independent.
Each element $x$ of these $\frac{q^{n+1}-1}{q-1}$
elements represents $q-1$
elements of $\F_q^{n+1}$ which are the multiples of $x$ by the nonzero
elements of $\F_q$. A line in PG($n,q$) consists of $q+1$ points.
Given two distinct points $x$ and $y$, there
is exactly one line which contains these two points.
This line contains $x$ and $y$ and the $q-1$ points
of the form $\gamma x +y$, where $\gamma \in \F_q \setminus \{ 0 \}$.
A point is a 0-subspace in PG($n,q$), a line is a 1-subspace in PG($n,q$),
and a $k$-subspace is constructed by taking a
$(k-1)$-space $Y$ and a point $x$ not on $Y$ and all points that
are constructed by a linear combination of $x$ with any set of points
from $Y$.

The \emph{Grassmannian} $\cG_q(n,k)$ consists of all the $k$-dimensional
subspaces of $\F_q^n$. Clearly, a $k$-dimensional subspace from
$\cG_q(n,k)$ is a $(k-1)$-subspace of PG($n-1,q$). Extensive research
has been done on the Grassmannian in the past few years. The motivation
for this research is the application of codes in the Grassmannian for
error-correction in random network coding found recently
by Koetter and Kschischang~\cite{KK}.

A subset $\C$ of $\cG_q(n,k)$ is called an  $(n,M,d,k)_q$ \emph{constant
dimension code} if it has size $M$ and minimum subspace distance $d$,
where the distance function in $\cG_q(n,k)$ is defined by
$$
d_S (X,\!Y) \deff 2k
-2 \dim\bigl( X\, {\cap}Y\bigr),
$$
for any two subspaces $X$ and $Y$ in $\cG_q(n,k)$.

Two $k$-dimensional subspaces in $\cG_q(n,k)$ are called
disjoint if their intersection is the null space.
A \emph{spread} in $\cG_q(n,k)$ is a set $\dS$ of pairwise
disjoint $k$-dimensional subspaces, such that each
nonzero element of $\F_q^n$ is contained in exactly one
element of $\dS$. Clearly, such a spread is a $(k-1)$-spread
in PG($n-1,q$). Hence, a spread in $\cG_q(n,k)$ exists
if and only if $k$ divides $n$. A~set of $M$ pairwise
disjoint spreads in $\cG_q(n,k)$ is a set of $M$ pairwise
disjoint $(k-1)$-spreads in PG($n-1,q$). Henceforth, our discussion
will be in terms of $k$-dimensional subspaces of $\cG_q(n,k)$
and will be translated into related results
in terms of subspaces in projective geometry. The reason is that
some of the new developed theory for constant dimension codes
will serve as the building blocks for our constructions and results.

One of the main constructions for constant dimension codes
is based on rank-metric codes.
For two $k \times \ell$ matrices $A$ and $B$ over $\F_q$ the {\it
rank distance} is defined by
$$
d_R (A,B) \deff \text{rank}(A-B)~.
$$
A  $[k \times \ell,\varrho,\delta]_q$ {\it rank-metric code} $\cC$
is a linear code, whose codewords are $k \times \ell$ matrices
over~$\F_q$; they form a linear subspace with dimension $\varrho$
of $\F_q^{k \times \ell}$, and for each two distinct codewords
$A$~and~$B$ we have that $d_R (A,B) \geq \delta$ (clearly,
$\delta \leq \min \{k , \ell \}$). For a $[k \times
\ell,\varrho,\delta]_q$ rank-metric code~$\cC$ it was proved
in~\cite{Del78,Gab85,Rot91} that
\begin{equation}
\label{eq:MRD} \varrho \leq \text{min}
\left\{k(\ell-\delta+1),\ell(k-\delta+1) \right\}~.
\end{equation}
This bound is attained for all possible parameters and the codes
which attain it are called {\it maximum rank distance} codes (or
MRD codes in short).

There is a close connection between constant dimension codes and
rank-metric codes~\cite{EtSi09,SKK08}.
Let $A$ be a $k \times \ell$ matrix over~$\F_q$ and let $I_k$ be the
$k \times k$ identity matrix. The matrix $[ I_k ~ A ]$ can be
viewed as a generator matrix of a $k$-dimensional subspace of
$\F_q^{k+\ell}$, and it is called the \emph{lifting} of
$A$~\cite{SKK08}.
\begin{example}
Let $A$ and $[I_3 ~A]$ be the following matrices over $\F_2$
$$A=\left( \begin{array}{ccc}
1& 1 & 0\\
0& 1 & 1\\
0& 0 & 1
\end{array}
\right) ~, ~~ [I_3 ~ A] =\left( \begin{array}{cccccc}
1&0&0&1& 1 & 0\\
0&1&0&0& 1 & 1\\
0&0&1&0& 0 & 1
\end{array}
\right),
$$
then the subspace obtained by the lifting of $A$ is given by the
following $8$ vectors:
$$(1 0 0 1 1 0),
(0 1 0 0 1 1), (0 0 1 0 0 1),(1 1 0 1 0 1),$$
$$(1 0 1 1 1 1),(0 1 1 0 1 0),
(1 1 1 1 0 0),(0 0 0 0 0 0).
$$
\end{example}

A constant dimension code $\mathbb{C}$ such that all its codewords
are lifted codewords of an MRD code is called a \emph{lifted MRD
code}~\cite{SKK08}. This code will be denoted by $\CMRD$.
A lifted MRD code constructed from $[k \times (n-k),
(n-k)(k-\delta +1),\delta ]_q$ MRD code will be called
an $(n,k,\delta)_q$ $\CMRD$.

\begin{theorem}\cite{SKK08}
\label{trm:param lifted MRD} If $\cC$ is a $[k \times (n-k),
(n-k)(k-\delta +1),\delta ]_q$ MRD code then $(n,k,\delta)_q$ $\C^{\textmd{MRD}}$ is
an $(n,q^{(n-k)(k-\delta+1)}, 2\delta, k)_{q}$ code.
\end{theorem}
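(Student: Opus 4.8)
The plan is to verify the three claimed parameters of the lifted MRD code $\CMRD$ — its length $n$, its size $q^{(n-k)(k-\delta+1)}$, and its minimum distance $2\delta$ — by tracking what the lifting operation does to the underlying $[k \times (n-k), (n-k)(k-\delta+1), \delta]_q$ MRD code $\cC$. The dimension parameter $k$ of the constant dimension code is immediate, since every codeword $[I_k ~ A]$ is the row space of a $k \times n$ matrix of full rank $k$ (the identity block forces the rows to be independent), hence a genuine element of $\cG_q(n,k)$.

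**First I would** settle the length and the size. Each $A \in \cC$ is a $k \times (n-k)$ matrix, so $[I_k ~ A]$ is $k \times n$ and its row space lives in $\F_q^{n}$; this gives the length parameter $n$. For the size, the key observation is that the lifting map $A \mapsto \spn([I_k ~ A])$ is injective: if two matrices $A \ne B$ produced the same subspace, the reduced row echelon form of $[I_k ~ A]$ is $[I_k ~ A]$ itself (the leading $k \times k$ block is already the identity), so the subspace determines $A$ uniquely. Therefore the number of codewords $M$ equals $\abs{\cC} = q^{\varrho} = q^{(n-k)(k-\delta+1)}$, matching the claim.

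**The main step** — and the one I expect to be the real obstacle — is the minimum distance. I would take two distinct codewords $X = \spn([I_k ~ A])$ and $Y = \spn([I_k ~ B])$ and compute $d_S(X,Y) = 2k - 2\dim(X \cap Y)$. The plan is to relate $\dim(X \cap Y)$ to the rank of $A - B$. The clean way is to note that a vector in $X \cap Y$ corresponds to a pair of coefficient vectors $u, v \in \F_q^k$ with $u[I_k ~ A] = v[I_k ~ B]$; the first $k$ coordinates force $u = v$, and the last $n-k$ coordinates then force $u(A - B) = 0$. Hence $\dim(X \cap Y)$ equals the dimension of the left null space of $A - B$, which is $k - \text{rank}(A-B)$. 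Substituting gives $d_S(X,Y) = 2\,\text{rank}(A-B) = 2\,d_R(A,B) \geq 2\delta$, with equality achieved because $\cC$ attains rank distance exactly $\delta$ on some pair. This identity $d_S = 2 d_R$ between the subspace distance of liftings and the rank distance of the matrices is the crux; once it is in hand, the MRD property $d_R \geq \delta$ of $\cC$ transfers directly to $d_S \geq 2\delta$, completing the proof.
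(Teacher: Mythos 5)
Your proof is correct, and it is essentially the standard argument for this theorem: the paper itself gives no proof (the result is quoted from the reference [SKK08]), and your key identity $d_S(X,Y) = 2\,d_R(A,B)$ for lifted codewords, obtained by identifying $X \cap Y$ with the left null space of $A-B$, is exactly the argument in that reference. Nothing is missing; the injectivity of lifting (via reduced row echelon form) and the attainment of the minimum distance are both handled properly.
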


\begin{remark}
The parameters of the $[k \times (n-k), (n-k)(k-\delta +1),\delta ]_q$
MRD code $\cC$ given in Theorem~\ref{trm:param lifted MRD} imply
that $k \leq n-k$, by (\ref{eq:MRD}).
\end{remark}

Let $\V^{(n,k)}$ be the set of nonzero vectors of $\F_q^n$ whose
first $k$ entries form a nonzero vector.
The following results were proved in~\cite{EtSi13}.

\begin{lemma}
\label{lem:resolv}
The codewords of an $(n,k,\delta)_q$ $\CMRD$
can be partitioned into $q^{(n-k)(k-\delta)}$
sets, called parallel classes, each one of size $q^{n-k}$, such
that in each parallel class each element of $\V^{(n,k)}$ is
contained in exactly one codeword.
\end{lemma}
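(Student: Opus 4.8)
The plan is to exploit the algebraic structure of the lifted MRD code, namely the fact that its codewords correspond to matrices $A$ ranging over a $[k \times (n-k), (n-k)(k-\delta+1), \delta]_q$ MRD code $\cC$, which is a \emph{linear} subspace of $\F_q^{k \times (n-k)}$. A codeword of the $\CMRD$ is the row space of $[I_k ~ A]$, and an element of $\V^{(n,k)}$ is a nonzero vector $(x,y)$ with $x \in \F_q^k \setminus \{0\}$ and $y \in \F_q^{n-k}$. Such a vector lies in the codeword generated by $[I_k ~ A]$ precisely when $y = xA$, since the row space consists of all vectors $(x, xA)$. The first step is therefore to record this parametrization: each $(x,y) \in \V^{(n,k)}$ is covered by the codeword indexed by $A$ iff $xA = y$.

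With this in hand, the natural idea is to define the parallel classes as cosets of a suitable linear subcode of $\cC$. First I would single out, inside the index space $\cC$, the set of matrices $A$ for which a \emph{fixed} reference vector, say $x_0 = (1,0,\dots,0) \in \F_q^k$, is mapped to the zero vector, i.e.\ $x_0 A = 0$; equivalently the matrices whose first row is zero. Call the intersection of $\cC$ with this condition $\cC_0$. A counting/dimension argument via the MRD minimum distance $\delta$ should show that $\cC_0$ is a subspace of dimension exactly $(n-k)(k-\delta)$, so that $\cC$ decomposes into $q^{(n-k)(k-\delta)}$ cosets of $\cC_0$, each of size $q^{(n-k)}$. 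These cosets are the candidate parallel classes, and their number and size already match the statement.

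The heart of the argument is then to verify that each coset covers every element of $\V^{(n,k)}$ exactly once. Fix a coset $A_0 + \cC_0$ and an arbitrary $(x,y) \in \V^{(n,k)}$. I want to show there is a unique $A$ in this coset with $xA = y$, i.e.\ a unique $B \in \cC_0$ with $xB = y - xA_0$. The map $B \mapsto xB$ from $\cC_0$ to $\F_q^{n-k}$ is linear; the existence-and-uniqueness claim is exactly the statement that this map is a bijection. Surjectivity onto $\F_q^{n-k}$ follows from a dimension count once injectivity is established, and injectivity is the crux: if $B, B' \in \cC_0$ satisfy $xB = xB'$ then $B - B' \in \cC$ has $x$ in the left kernel, so the codeword $B - B'$ has rank at most $k-1$; but nonzero codewords of $\cC$ have rank at least $\delta$, and combined with $x_0(B-B') = 0$ the two rank deficiencies should force $B - B' = 0$ when the parameters are taken into account.

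The main obstacle I anticipate is exactly this injectivity step: making the rank bookkeeping precise so that the two constraints ($x$ in the left kernel for a general $x$, and $x_0$ in the left kernel by membership in $\cC_0$) genuinely force $B - B' = 0$ rather than merely bounding its rank. The clean way around this is to handle the cases $x$ parallel to $x_0$ and $x$ independent of $x_0$ separately, or better, to reparametrize so that injectivity on each coset reduces to a single clean statement: a nonzero MRD codeword cannot have a prescribed nonzero vector in its left kernel while simultaneously lying in $\cC_0$, because that would drop its rank below $\delta$. Once the bijection is proved, surjectivity and hence the exactly-once covering follow immediately by cardinality, since $\abs{\cC_0} = q^{(n-k)}\cdot$(the dimension count) matches $\abs{\F_q^{n-k}}$ up to the scalar redundancy in $x$, and the proof is complete.
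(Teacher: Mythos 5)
Your parametrization is correct---a point $(x,y)\in\V^{(n,k)}$ lies in the codeword lifted from $A$ iff $xA=y$---and quotienting the linear MRD code $\cC$ by a subcode is indeed the right family of ideas, but the subcode you chose is the wrong one, and the failure cannot be repaired by more careful rank bookkeeping. First, the numerology is backwards: your $\cC_0=\{A\in\cC : x_0A=0\}$ does have dimension $(n-k)(k-\delta)$ (rank--nullity of $A\mapsto x_0A$ gives the lower bound, and the bound \eqref{eq:MRD} applied to the shortened $(k-1)\times(n-k)$ code gives the upper bound), but then $\abs{\cC_0}=q^{(n-k)(k-\delta)}$, so $\cC$ splits into $q^{n-k}$ cosets of size $q^{(n-k)(k-\delta)}$ each: the number and the size of your classes are swapped relative to the statement, and they agree only when $\delta=k-1$. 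Second, and fatally: any two matrices in the same coset of $\cC_0$ differ by an element of $\cC_0$, i.e., their difference has $x_0$ in its left kernel, so the map $A\mapsto x_0A$ is \emph{constant} on each coset. Hence a coset covers a point $(x_0,y)$ either $\abs{\cC_0}$ times (when $y=x_0A_0$) or never---the exact opposite of the parallel-class property. The injectivity you hope for also fails for $x$ independent of $x_0$ whenever $\delta<k-1$: by rank--nullity applied to $A\mapsto(x_0A,xA)$, the codewords with both $x_0$ and $x$ in the left kernel form a subspace of dimension at least $(n-k)(k-\delta-1)>0$, so a rank between $\delta$ and $k-2$ is perfectly possible and nothing forces $B-B'=0$.

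What the covered-exactly-once property actually requires of a class of size $q^{n-k}$ is that all pairwise differences have \emph{trivial} left kernel, i.e., rank exactly $k$: then for every nonzero $x$ the map $A\mapsto xA$ is injective on the class, hence by cardinality a bijection onto $\F_q^{n-k}$. So one must take cosets of a $[k\times(n-k),\,n-k,\,k]_q$ MRD subcode $\cD\subseteq\cC$, all of whose nonzero words have full rank $k$---not of the shortened code $\cC_0$, all of whose nonzero words have rank at most $k-1$; your condition on the subcode is precisely anti-correlated with the one needed. Such a $\cD$ exists inside the Gabidulin/Delsarte codes used here: writing codewords as evaluations of linearized polynomials $f(z)=\sum_{i=0}^{k-\delta}f_iz^{q^i}$ at $k$ elements of $\F_{q^{n-k}}$ that are linearly independent over $\F_q$, take $\cD=\{\,f_0z : f_0\in\F_{q^{n-k}}\,\}$; multiplication by $f_0\neq0$ is invertible, so these words have rank $k$, and $\cC$ then splits into $q^{(n-k)(k-\delta)}$ cosets of $\cD$, each of size $q^{n-k}$, which are the desired parallel classes. (Incidentally, this $\cD$ is a linear complement of your $\cC_0$ in $\cC$: the classes are cosets of the complement, not of $\cC_0$.) Note that the paper gives no proof of this lemma but quotes it from~\cite{EtSi13}, where exactly this full-rank-subcode coset structure is the argument.
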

\begin{cor}
\label{cor:MRD_parts}
The codewords of an $(n,k,\delta)_q$ $\CMRD$
can be partitioned into $q^{(n-k)(k-\delta)}$
codes, each one is an $(n,q^{n-k},2k,k)_q$ code.
\end{cor}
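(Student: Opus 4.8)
The plan is to read off the four parameters of each parallel class, the only nontrivial one being the minimum distance. By Lemma~\ref{lem:resolv}, each of the $q^{(n-k)(k-\delta)}$ parallel classes is a set of $q^{n-k}$ codewords of the $(n,k,\delta)_q$ $\CMRD$, and each such codeword is a $k$-dimensional subspace of $\F_q^n$. This already fixes three of the parameters, namely the length $n$, the dimension $k$, and the size $M=q^{n-k}$, as well as the number of classes. It remains only to establish that the minimum subspace distance inside a single parallel class equals $2k$.

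Since $d_S(X,Y)=2k-2\dim(X\cap Y)$ and $\dim(X\cap Y)\geq 0$, we always have $d_S(X,Y)\leq 2k$, with equality precisely when $X\cap Y=\{0\}$. So the claim about the distance is equivalent to showing that any two distinct codewords lying in the same parallel class are disjoint, i.e.\ meet only in the null space. Because a parallel class contains $q^{n-k}\geq q\geq 2$ codewords (recall from the remark that $n-k\geq k\geq 1$), there is at least one pair to compare, and once disjointness is proved the minimum distance is exactly $2k$, the maximal possible value in $\cG_q(n,k)$.

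The heart of the argument is the following observation. Every codeword of a $\CMRD$ is the lifting $[I_k ~ A]$ of some matrix $A$, so its nonzero vectors are exactly the $(v,vA)$ with $0\neq v\in\F_q^k$; in particular the first $k$ entries of such a vector form the nonzero vector $v$, whence every nonzero vector of every codeword lies in $\V^{(n,k)}$. Now suppose two distinct codewords $X$ and $Y$ of one parallel class shared a nonzero vector $w$. Then $w\in\V^{(n,k)}$ and $w$ would be contained in both $X$ and $Y$, contradicting the property from Lemma~\ref{lem:resolv} that within a parallel class each element of $\V^{(n,k)}$ is contained in exactly one codeword. Hence $X\cap Y=\{0\}$, so $d_S(X,Y)=2k$ for every pair, and the parallel class is an $(n,q^{n-k},2k,k)_q$ code.

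I expect the only point requiring genuine care to be the verification that all nonzero vectors of a lifted codeword lie in $\V^{(n,k)}$; once that is in hand, disjointness is forced by the exact-cover property of Lemma~\ref{lem:resolv}, and everything else is a direct translation of that lemma into the language of constant dimension codes together with the elementary fact that $2k$ is the largest attainable subspace distance. In short, the main obstacle is conceptual rather than computational: recognizing that the ``each vector covered once'' condition of the lemma is exactly the statement that consecutive codewords of a class are pairwise disjoint.
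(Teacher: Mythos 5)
Your proof is correct and follows essentially the same route as the paper, which presents Corollary~\ref{cor:MRD_parts} as an immediate consequence of the parallel-class partition in Lemma~\ref{lem:resolv} (both cited from the reference on lifted MRD codes). Your filled-in details --- that all nonzero vectors of a lifted codeword lie in $\V^{(n,k)}$, so the exact-cover property of a parallel class forces pairwise trivial intersection and hence minimum subspace distance $2k$ --- are exactly the intended justification.
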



For a given $x \in \F_2^k$, let $\V_x^{(n,k)}$ denote the set nonzero
vectors in $\F_2^n$ whose first $k$ entries form the vector $x$.
In the sequel, let ${\bf 0}$ denote the all-zero vector.

In the sequel we will represent nonzero elements of the finite field
$\F_{2^m}$ in two different ways. The first one is by $m$-tuples over $\F_2$
(in other words, $\F_{2^m}$ is represented by $\F_2^m$)
and the second one is by powers of a primitive element $\alpha$ in $\F_{2^m}$.
We will not distinguish between these two isomorphic representations. When an $m$-tuple $z$
over $\F_2$ will be multiplied by an element $\beta \in \F_{2^m}$ we
will view $z$ as an element in $\F_{2^m}$ and the result will be
an element in $\F_{2^m}$ which is also represented by an $m$-tuple
over $\F_2$ (an element in $\F_2^m$). Also, when we write
$\V_\gamma^{(n,k)}$, where $\gamma \in \F_{2^k}$, it is
the same as writing $\V_x^{(n,k)}$, $x \in \F_2^k$, where $x$
is the binary $k$-tuple which represents~$\gamma$.
Therefore, vectors can be represented by powers of primitive
elements in the related finite field. We will use this notation
in some cases.

For a set $S \subseteq \F_2^m$ and a nonzero element $\beta\in \F_{2^m}$,
we define $\beta S \deff \{ \beta x ~:~ x \in S \}$. We
note that we can take the set $S$ to be a subspace. By using the
Singer cycle subgroup~\cite{Kan80} it is observed that
if $X$ is a $k$-dimensional subspace of $\F_2^m$ and $\beta$ is a
nonzero element of $\F_{2^m}$ then~$\beta X$ is also a
$k$-dimensional subspace of $\F_2^m$. This property will be used throughout
the paper.

\section{A construction for $q=2$ and $n=2k$}
\label{sec:n=2k}

Recall that the vectors of $\F_2^{2k} \setminus \{ {\bf 0} \}$ are partitioned into $2^k$ parts,
$\V_x^{(2k,k)}$, $x \in \F_2^k$.
Let $\V_{\bf 0}$ denote the $k$-dimensional subspace spanned by $\V_{\bf 0}^{(2k,k)}$.

Consider $k$-dimensional subspaces from $\cG_2(2k,k)$ of three types:
\begin{enumerate}
\item A $k$-dimensional subspace $Y \in \cG_2(2k,k)$ is of Type A if
for each $x \in \F_2^k \setminus \{ {\bf 0} \}$, $Y$ contains
exactly one vector from $\V_x^{(2k,k)}$, and $Y$ does not contain
any vector from $\V_{\bf 0}^{(2k,k)}$.

\item A $k$-dimensional subspace $Y \in \cG_2(2k,k)$ is of Type B if
$Y$ contains exactly one vector from $\V_{\bf 0}^{(2k,k)}$.

\item A $k$-dimensional subspace $Y \in \cG_2(2k,k)$ is of Type C if all the nonzero vectors
of $Y$ are contained in $\V_{\bf 0}^{(2k,k)}$, i.e. $Y = \V_{\bf 0}$.
\end{enumerate}

One can readily verify that
\begin{lemma}
\label{lem:TypeB_structure}
If $Z$ is a $k$-dimensional subspace of Type B then $Z$ has the structure
$$\{ ( {\bf 0},{\bf 0}),( {\bf 0},z),(x_0,y_0),(x_0,y_1),(x_1,y_2),(x_1,y_3),
\ldots,(x_{2^{k-1}-2},y_{2^k-3}),(x_{2^{k-1}-2},y_{2^k-2}) \},$$
where $\{ {\bf 0},x_0,x_1,\ldots,x_{2^{k-1}-2} \}$ is a $(k-1)$-dimensional
subspace of $\F_2^{2k}$ and for each $i$, $0 \leq i \leq 2^{k-1}-2$, we have
$z= y_{2i} + y_{2i+1}$.
\end{lemma}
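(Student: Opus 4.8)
The plan is to analyze $Z$ through the coordinate projection $\pi:\F_2^{2k}\to\F_2^k$ sending $(a,b)\mapsto a$ onto the first $k$ entries, and to read off the claimed structure from the fibers of $\pi$ restricted to $Z$. First I would identify the kernel $\ker(\pi|_Z)=Z\cap(\{{\bf 0}\}\times\F_2^k)$. A nonzero vector lies in this intersection precisely when it is a nonzero element of $\V_{\bf 0}^{(2k,k)}$ contained in $Z$, and by the defining property of Type B there is exactly one such vector, namely $({\bf 0},z)$. Hence $\ker(\pi|_Z)=\{({\bf 0},{\bf 0}),({\bf 0},z)\}$ is one-dimensional, which already accounts for the first two vectors in the asserted list.

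Next I would apply rank--nullity to $\pi|_Z$: since $\dim Z=k$ and the kernel has dimension $1$, the image $W\deff\pi(Z)$ is a $(k-1)$-dimensional subspace of $\F_2^k$. Writing its $2^{k-1}-1$ nonzero elements as $x_0,x_1,\dots,x_{2^{k-1}-2}$ gives exactly the subspace $\{{\bf 0},x_0,\dots,x_{2^{k-1}-2}\}$ appearing in the statement. It then remains to describe, for each nonzero $x_i\in W$, the vectors of $Z$ lying above it under $\pi$.

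For this I would use that each nonempty fiber $\pi|_Z^{-1}(x_i)$ is a coset of $\ker(\pi|_Z)$ in $Z$, hence has exactly $2$ elements. If $(x_i,y)\in Z$, then adding the kernel element $({\bf 0},z)$ shows $(x_i,y+z)\in Z$ as well, so the fiber is $\{(x_i,y),(x_i,y+z)\}$. Labelling these two vectors $(x_i,y_{2i})$ and $(x_i,y_{2i+1})$ yields $y_{2i}+y_{2i+1}=y+(y+z)=z$ over $\F_2$, which is precisely the pairing claimed. Collecting the kernel together with the $2^{k-1}-1$ fibers of size $2$ exhausts all $2+2(2^{k-1}-1)=2^k$ vectors of $Z$ and produces the displayed description. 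The argument is essentially a routine application of the projection/coset picture; the only point demanding care is the faithful translation of the Type B hypothesis into the statement that $\ker(\pi|_Z)$ is exactly the one-dimensional space $\{({\bf 0},{\bf 0}),({\bf 0},z)\}$, since this single fact is what simultaneously forces the codimension-one image $W$ and the uniform pairing of the remaining vectors by the common difference $z$.
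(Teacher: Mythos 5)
Your proof is correct. The paper offers no argument for this lemma at all (it is prefaced with ``One can readily verify that''), and your projection/rank--nullity/coset argument is precisely the routine verification being alluded to: the Type B hypothesis pins down $\ker(\pi|_Z)=\{({\bf 0},{\bf 0}),({\bf 0},z)\}$, forcing a $(k-1)$-dimensional image and size-$2$ fibers paired by the common difference $z$. (Incidentally, your write-up also silently corrects a small slip in the statement: the subspace $\{{\bf 0},x_0,\ldots,x_{2^{k-1}-2}\}$ lives in $\F_2^{k}$, not $\F_2^{2k}$.)
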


For completeness, even so it is not necessary for our discussion,
we give the following lemma without a proof (this is left for the
interested reader).
\begin{lemma}
$~$
\begin{itemize}
\item There exist exactly $2^{k^2}$ distinct $k$-dimensional subspaces of Type A.

\item There exist exactly $(2^k-1)^2 2^{(k-1)^2}$ distinct $k$-dimensional subspaces of Type B.

\item There exists exactly one $k$-dimensional subspace of Type C.
\end{itemize}
\end{lemma}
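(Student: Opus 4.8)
**The plan is to count each type of subspace directly by exploiting the block structure imposed by the partition of $\F_2^{2k}\setminus\{\mathbf{0}\}$ into the $2^k$ parts $\V_x^{(2k,k)}$.** The key organizing idea is that every $k$-dimensional subspace $Y$ projects onto its "first-coordinate type" via the map $\pi\colon (x,y)\mapsto x$, and the interplay between the image $\pi(Y)\subseteq\F_2^k$ and the fiber structure over $\mathbf{0}$ determines the type. I would begin with Type C, the trivial case: since $\V_{\mathbf{0}}$ is itself a $k$-dimensional subspace (its nonzero vectors are exactly $\V_{\mathbf{0}}^{(2k,k)}$), any Type C subspace must equal $\V_{\mathbf{0}}$, giving exactly one.

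For Type A, the defining property is that $Y$ meets each $\V_x^{(2k,k)}$ with $x\neq\mathbf{0}$ in exactly one vector and avoids $\V_{\mathbf{0}}^{(2k,k)}$ entirely. This forces the projection $\pi|_Y\colon Y\to\F_2^k$ to be a bijection, so $Y$ is the row space of a matrix of the form $[I_k \mid A]$ for a unique $k\times k$ matrix $A$ over $\F_2$: the condition that $Y$ contains no nonzero vector $(\mathbf{0},y)$ is exactly injectivity of $\pi|_Y$, and then surjectivity is automatic by dimension count. Since each of the $2^{k^2}$ choices of $A$ yields a distinct such subspace and every Type A subspace arises this way, the count is $2^{k^2}$. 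I would present this as the cleanest case, leaning on the lifting formalism already established in the paper.

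**The Type B count is the main obstacle and deserves the most care.** Here $Y$ contains exactly one nonzero vector $(\mathbf{0},z)$ with $z\neq\mathbf{0}$, so the kernel of $\pi|_Y$ is one-dimensional and $\pi(Y)$ is a $(k-1)$-dimensional subspace $W\subseteq\F_2^k$; Lemma~\ref{lem:TypeB_structure} already pins down this structure. To count, I would: first choose the $(k-1)$-dimensional subspace $W=\spn{\mathbf{0},x_0,\dots,x_{2^{k-1}-2}}$ of $\F_2^k$, then choose $z$, then determine the freedom in the second coordinates $y_i$ over each nonzero $x_j\in W$. The constraint from Lemma~\ref{lem:TypeB_structure} is that over each $x_j$ the subspace contains a coset $\{y_{2j},y_{2j+1}\}$ with $y_{2j}+y_{2j+1}=z$, so once a single representative $y$-value is fixed over a spanning set of $W$, the rest are determined by linearity modulo the line $\{\mathbf{0},z\}$. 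The delicate point is avoiding overcounting: different choices of $(W,z,\{y_j\})$ can yield the same subspace, and one must verify the bookkeeping reconciles to $(2^k-1)^2\,2^{(k-1)^2}$. I would interpret the three factors as $(2^k-1)$ choices for the nonzero vector $z$, a factor of $(2^k-1)$ coming from the number of $(k-1)$-dimensional subspaces $W$ (equivalently a Gaussian-binomial count $\sbinomtwo{k}{k-1}=2^k-1$), and $2^{(k-1)^2}$ for the affine lifting data over $W$, analogous to the Type A matrix count but with dimensions reduced by one.

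However, since the paper explicitly states this lemma is given \emph{without} proof and is left to the interested reader, I would in practice simply record these three counts as the natural outcome of the projection-and-fiber analysis, remarking that each follows from the same "lifted-matrix plus kernel-coset" decomposition, and I would flag that the only subtlety is confirming the bijection between the combinatorial data $(W,z,\text{lifting})$ and the Type B subspaces is exact rather than many-to-one. A sanity check I would run is a global consistency count: the three type-counts should partition the relevant portion of $\cG_2(2k,k)$ consistently with the total number of $k$-subspaces meeting $\V_{\mathbf{0}}$ nontrivially, which offers an independent verification that the factor $2^{(k-1)^2}$ (rather than, say, $2^{(k-1)k}$) is correct.
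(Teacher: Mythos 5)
Your proposal is correct, and there is no proof in the paper to compare it against: the author states explicitly that this lemma is given without proof and left to the interested reader, so your argument fills a gap rather than paralleling one. The projection-and-fiber analysis is the natural route, and all three counts check out. Type C is immediate. For Type A, avoiding $\V_{\bf 0}^{(2k,k)}$ forces $\pi|_Y$ to be injective, hence bijective by dimension, so $Y$ is the row space of $[I_k ~ A]$ for a unique $k\times k$ matrix $A$, giving $2^{k^2}$; this is exactly the lifting formalism of the paper. For Type B, the one point you leave hanging --- that the correspondence between triples $(W,z,\phi)$ and Type B subspaces is a bijection rather than many-to-one --- closes immediately: from $Y$ itself one recovers $W=\pi(Y)$, then $z$ as the unique nonzero vector of $Y$ whose first $k$ coordinates vanish, and then $\phi\colon W\to\F_2^k/\Span{z}$ as the linear map whose graph is $Y/(\{{\bf 0}\}\times\Span{z})$; conversely each triple determines $Y$, so the count is exactly $(2^k-1)\cdot(2^k-1)\cdot 2^{(k-1)^2}$, the first factor counting choices of $z$, the second counting hyperplanes $W$ of $\F_2^k$, and the third counting linear maps between two $(k-1)$-dimensional spaces. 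One caution about your proposed sanity check: Types A, B, and C do not exhaust $\cG_2(2k,k)$ for general $k$, since a $k$-subspace can meet $\V_{\bf 0}$ in any dimension from $2$ to $k-1$ and then belongs to no type; the check works as a full partition only for $k=2$, where indeed $2^4+(2^2-1)^2\cdot 2+1=16+18+1=35=\sbinomtwo{4}{2}$.
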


Our construction which follows will yield
$2^k-1$ pairwise disjoint spreads in $\cG_2(2k,k)$. Each spread will consist
of exactly $2^k-1$ subspaces of Type B and exactly two subspaces of Type~A.
In the construction, a $k$-dimensional subspace $Z$ of $\F_2^{2k}$ will be represented as
$$Z=\{ ( {\bf 0},{\bf 0}),(x_0,y_0),(x_1,y_1),\ldots,(x_{2^k-2},y_{2^k-2}) \}~,$$
where $x_i,y_i \in \F_2^k$ and
$y_i \neq {\bf 0}$ if $x_i = {\bf 0}$, $0 \leq i \leq 2^k-2$.

Let $\C_0$ be a $(2k,k,k-1)_2$ $\CMRD$, i.e. a $(2k,2^{2k},2(k-1),k)_2$ code.
By Corollary~\ref{cor:MRD_parts},
$\C_0$ can be partitioned into $2^k$ codes, each one is a
$(2k,2^k,2k,k)_2$ code.
Each one of these $2^k$ codes can be completed to a spread
if we add $\V_{\bf 0}$ to the code.
$\C_0$ is constructed from a linear rank-metric code~$\cC$ and therefore
one of its codewords is the $k$-dimensional subspace
$\{ ({\bf 0},{\bf 0}),(x_0,{\bf 0}),(x_1,{\bf 0}),\ldots,(x_{2^k-2},{\bf 0}) \}$.
Since the minimum subspace distance of~$\C_0$ is $2(k-1)$,
it follows that for each other codeword
$\{ ( {\bf 0},{\bf 0}),(x_0,y_0),(x_1,y_1),\ldots,(x_{2^k-2},y_{2^k-2}) \}$ of $\C_0$,
at most one of $y_i$'s is the all-zero vector. Therefore,

\begin{lemma}
\label{lem:Cstruct}
The code $\C_0$ can be partitioned into $2^k$ $(2k,2^k,2k,k)_2$ codes,
for which, each one which does not contain the codeword
$\{ ({\bf 0},{\bf 0}),(x_0,{\bf 0}),(x_1,{\bf 0}),\ldots,(x_{2^k-2},{\bf 0}) \}$,
contains exactly $2^k-1$ codewords of the form
$$\{ ( {\bf 0},{\bf 0}),(x_0,y_0),(x_1,y_1),\ldots,(x_{2^k-2},y_{2^k-2}) \}~,$$
in which exactly one of the $y_i$'s is the all-zero vector.
\end{lemma}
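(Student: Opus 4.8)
The plan is to read off from the lifted–MRD structure exactly when a codeword has a zero $y_i$, and then to pin down the distribution of such codewords among the parts by a counting argument based on the resolvability of Lemma~\ref{lem:resolv}. First I would recall that every codeword of $\C_0$ is the lifting $[I_k~A]$ of some $A$ in the linear rank-metric code $\cC$, so its set of vectors is exactly $\{(v,vA):v\in\F_2^k\}$; writing the codeword as $\{({\bf 0},{\bf 0}),(x_0,y_0),\ldots,(x_{2^k-2},y_{2^k-2})\}$ means $y_i=x_i A$, where $x_i$ runs over the nonzero vectors of $\F_2^k$. Hence $y_i={\bf 0}$ precisely when $x_i$ lies in the left kernel of $A$, and the number of nonzero $x_i$ with $x_i A={\bf 0}$ equals $2^{k-\text{rank}(A)}-1$. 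Since $\C_0$ has minimum subspace distance $2(k-1)$, every nonzero $A\in\cC$ has $\text{rank}(A)\in\{k-1,k\}$; therefore a codeword has no zero $y_i$ iff $\text{rank}(A)=k$, has exactly one zero $y_i$ iff $\text{rank}(A)=k-1$, and has all $y_i={\bf 0}$ iff $A={\bf 0}$, which is the distinguished codeword $\{({\bf 0},{\bf 0}),(x_0,{\bf 0}),\ldots,(x_{2^k-2},{\bf 0})\}$.

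Next I would take the partition of $\C_0$ into $2^k$ codes guaranteed by Corollary~\ref{cor:MRD_parts}, which are exactly the parallel classes of Lemma~\ref{lem:resolv}. The property I will lean on is that inside each such class $P$ every vector of $\V^{(2k,k)}$ lies in exactly one codeword. Fixing a nonzero $x\in\F_2^k$ and letting $y$ range over $\F_2^k$, the $2^k$ vectors $(x,y)$ of $\V_x^{(2k,k)}$ are then covered bijectively by the $2^k$ codewords of $P$; in particular the vector $(x,{\bf 0})$ lies in exactly one codeword of $P$, i.e.\ exactly one $A\in P$ satisfies $xA={\bf 0}$.

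The crux is then a double count, for a fixed class $P$, of the incidences $(x,A)$ with $x\in\F_2^k\setminus\{{\bf 0}\}$, $A\in P$, and $xA={\bf 0}$. Summing over $x$ gives $2^k-1$ such incidences by the previous paragraph. Summing instead over $A\in P$ and using the first paragraph, the codeword $A={\bf 0}$ (if it lies in $P$) contributes $2^k-1$, each rank-$(k-1)$ codeword contributes $1$, and each rank-$k$ codeword contributes $0$. Consequently, if $P$ is the class containing the distinguished codeword then $P$ contains no rank-$(k-1)$ codeword, while if $P$ does not contain it then $P$ contains exactly $2^k-1$ rank-$(k-1)$ codewords, that is, exactly $2^k-1$ codewords with exactly one zero $y_i$, which is the assertion. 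I expect the only real obstacle to be obtaining this count per class rather than merely on average; this is precisely what the exact covering property of Lemma~\ref{lem:resolv} supplies, and the double count converts it into the stated distribution.
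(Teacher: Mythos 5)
Your proposal is correct and follows essentially the same route as the paper: the partition into the parallel classes of Corollary~\ref{cor:MRD_parts}/Lemma~\ref{lem:resolv}, the observation that the lifted zero matrix is a codeword (linearity of $\cC$), the minimum-distance fact that every other codeword has at most one zero $y_i$, and the covering count over the vectors $(x,{\bf 0})$. Your derivation of the ``at most one zero $y_i$'' step via $\mathrm{rank}(A)$ is equivalent to the paper's subspace-distance argument, and your explicit double count simply fills in the counting that the paper leaves implicit in its ``Therefore.''
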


\begin{cor}
\label{cor:Cstruct}
There exists a $(2k,2^k+1,2k,k)_2$ code which contains $\V_{\bf 0}$
as a codeword and for each codeword
$\{ ( {\bf 0},{\bf 0}),(x_0,y_0),(x_1,y_1),\ldots,(x_{2^k-2},y_{2^k-2}) \}$
at most one of the $y_i$'s is the all-zero vector.
\end{cor}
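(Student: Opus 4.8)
The plan is to read the desired code directly off the partition furnished by Lemma~\ref{lem:Cstruct}, so that the corollary becomes little more than a selection step plus two short verifications. First I would single out one of the $2^k$ parts, call it $\cP$, that does \emph{not} contain the codeword $\{({\bf 0},{\bf 0}),(x_0,{\bf 0}),\ldots,(x_{2^k-2},{\bf 0})\}$. Since this particular codeword lies in exactly one part of the partition and there are $2^k>1$ parts in total, at least one such part $\cP$ exists. By Lemma~\ref{lem:Cstruct}, every codeword of $\cP$ has at most one $y_i$ equal to ${\bf 0}$: precisely $2^k-1$ of its codewords have exactly one all-zero $y_i$, and the single remaining codeword (which is not the excluded one, by our choice of $\cP$) has none.

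Next I would adjoin $\V_{\bf 0}$ to $\cP$ and check that $\cP\cup\{\V_{\bf 0}\}$ is a $(2k,2^k+1,2k,k)_2$ code. The size is clearly $2^k+1$, and it remains to see that $\V_{\bf 0}$ is disjoint from every codeword of $\cP$, which keeps the minimum subspace distance at $2k$. This is exactly the disjointness already recorded in the text preceding Lemma~\ref{lem:Cstruct}: each codeword of $\C_0$ has a unique vector whose first block is ${\bf 0}$, namely $({\bf 0},{\bf 0})$, so its intersection with $\V_{\bf 0}=\{({\bf 0},y):y\in\F_2^k\}$ is the null space. Hence $\cP\cup\{\V_{\bf 0}\}$ is a spread, and it contains $\V_{\bf 0}$ as a codeword by construction.

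Finally I would verify the $y_i$-condition for the newly added codeword $\V_{\bf 0}$ itself. Every nonzero vector of $\V_{\bf 0}$ has first block ${\bf 0}$, so in the representation $\{({\bf 0},{\bf 0}),(x_0,y_0),\ldots,(x_{2^k-2},y_{2^k-2})\}$ each $x_i={\bf 0}$ and therefore, by the stated convention, each $y_i\neq{\bf 0}$; in particular $\V_{\bf 0}$ has no all-zero $y_i$. Combining this with the bound obtained for the codewords of $\cP$, every codeword of $\cP\cup\{\V_{\bf 0}\}$ has at most one $y_i$ equal to ${\bf 0}$, which is the assertion. I do not expect any genuine obstacle here, since the construction is simply a choice of part followed by the adjunction of $\V_{\bf 0}$; the only point requiring a moment's care is confirming that the extra codeword $\V_{\bf 0}$ does not itself violate the ``at most one zero $y_i$'' condition, and this is immediate once one notes that $\V_{\bf 0}$ contains no vector with nonzero first block.
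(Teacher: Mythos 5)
Your proposal is correct and follows exactly the route the paper intends: Lemma~\ref{lem:Cstruct} together with the preceding observations (that $\C_0$ is partitioned into $2^k$ parts, each completed to a spread by adjoining $\V_{\bf 0}$, whose vectors all have first block ${\bf 0}$) yields the corollary by selecting a part avoiding the codeword $\{({\bf 0},{\bf 0}),(x_0,{\bf 0}),\ldots,(x_{2^k-2},{\bf 0})\}$ and adding $\V_{\bf 0}$. The paper leaves these verifications implicit, and you have simply filled them in faithfully.
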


Let $\C$ be a $(2k,2^k+1,2k,k)_2$ code as described in
Corollary~\ref{cor:Cstruct}, i.e. it contains $\V_{\bf 0}$
as a codeword and for each codeword
$\{ ( {\bf 0},{\bf 0}),(x_0,y_0),(x_1,y_1),\ldots,(x_{2^k-2},y_{2^k-2}) \}$
at most one of the~$y_i$'s is the all-zero vector.
Let $\overleftrightarrow{\C}$ be the $(2k,2^k+1,2k,k)_2$ code
obtained from $\C$ as follows
$$
\overleftrightarrow{\C} \deff \Bigl\{ \{ ( {\bf 0},{\bf 0}),(x_0,y_0),(x_1,y_1),\ldots,(x_{2^k-2},y_{2^k-2}) \} ~:~
\{ ( {\bf 0},{\bf 0}),(y_0,x_0),(y_1,x_1),\ldots,(y_{2^k-2},x_{2^k-2}) \} \in \C  \Bigr\} .
$$

Henceforth, let $\alpha$ be a primitive element in the field $\F_{2^k}$.
As a consequence of Lemma~\ref{lem:Cstruct} and
Corollary~\ref{cor:Cstruct} we have
\begin{lemma}
\label{lem:C_two_types}
The code $\overleftrightarrow{\C}$ is a spread, in $\cG_2(2k,k)$, which consists
of exactly $2^k-1$ subspaces of Type B and exactly two subspaces of Type A.
One of the two subspaces of Type A has the form
$\{ ( {\bf 0},{\bf 0}),(x_0,{\bf 0}),(x_1,{\bf 0}),\ldots,(x_{2^k-2},{\bf 0}) \}$.
\end{lemma}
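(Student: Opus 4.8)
The plan is to recognize $\overleftrightarrow{\C}$ as the image of $\C$ under the linear coordinate-swap map and then read off the three types from the structure of $\C$ guaranteed by Corollary~\ref{cor:Cstruct}. Define $\sigma:\F_2^{2k}\to\F_2^{2k}$ by $\sigma(a,b)=(b,a)$ for $a,b\in\F_2^k$, where we write $\F_2^{2k}=\F_2^k\times\F_2^k$. Since $\sigma$ is an invertible linear map it carries $k$-dimensional subspaces to $k$-dimensional subspaces, and the defining relation of $\overleftrightarrow{\C}$ says precisely that $\overleftrightarrow{\C}=\{\sigma(W):W\in\C\}$.

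First I would dispose of the assertion that $\overleftrightarrow{\C}$ is a spread. The code $\C$ is a $(2k,2^k+1,2k,k)_2$ code, and minimum subspace distance $2k$ forces every two of its codewords to meet only in the null space; as it has $2^k+1=\frac{2^{2k}-1}{2^k-1}$ pairwise disjoint $k$-subspaces, a count of nonzero vectors shows that $\C$ partitions $\F_2^{2k}\setminus\{{\bf 0}\}$, that is, $\C$ is itself a spread. Because $\sigma$ is a linear automorphism of $\F_2^{2k}$, the image $\overleftrightarrow{\C}=\sigma(\C)$ is again a spread in $\cG_2(2k,k)$.

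Next I would pin down the types of the codewords of $\C$. The codeword $\V_{\bf 0}$ is of Type~C. Every other codeword $W$ is disjoint from $\V_{\bf 0}$, so it contains no nonzero vector whose first $k$ entries vanish; hence the projection of $W$ onto its first $k$ coordinates has trivial kernel and is therefore a bijection onto $\F_2^k$, so $W$ meets each $\V_x^{(2k,k)}$ with $x\neq{\bf 0}$ in exactly one vector and misses $\V_{\bf 0}^{(2k,k)}$, i.e.\ $W$ is of Type~A. Thus $\C$ consists of $\V_{\bf 0}$ together with $2^k$ subspaces of Type~A. A short counting step then separates these Type~A codewords according to how many of their $y_i$'s vanish: the $2^k-1$ vectors $(x,{\bf 0})$ with $x\neq{\bf 0}$ all lie in $\C$ but not in $\V_{\bf 0}$, and by Corollary~\ref{cor:Cstruct} each Type~A codeword contains at most one of them, so exactly $2^k-1$ of the Type~A codewords contain exactly one vector with zero $y$-part and the remaining one contains none.

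Finally I would transport these three families through $\sigma$ and identify the resulting types; this is where the bookkeeping must be done carefully, since Types~A, B, and~C are defined through the first ($x$) half while the hypothesis on $\C$ constrains the second ($y$) half, and $\sigma$ interchanges the two. The image $\sigma(\V_{\bf 0})=\{(x,{\bf 0}):x\in\F_2^k\}$ is exactly the displayed Type~A subspace $\{({\bf 0},{\bf 0}),(x_0,{\bf 0}),\ldots,(x_{2^k-2},{\bf 0})\}$. The unique Type~A codeword of $\C$ with no vanishing $y_i$ has all second halves nonzero, so its $\sigma$-image has all first halves nonzero and is again of Type~A. Each of the $2^k-1$ Type~A codewords of $\C$ with exactly one vanishing $y_i$ maps under $\sigma$ to a subspace whose first half vanishes on exactly one nonzero vector, hence which meets $\V_{\bf 0}^{(2k,k)}$ in exactly one vector, i.e.\ is of Type~B. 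This yields exactly two subspaces of Type~A (one of them of the displayed form) and $2^k-1$ subspaces of Type~B, accounting for all $2^k+1$ codewords. The only genuine obstacle is this last type-tracking under the swap together with the preceding count of vanishing $y$-parts; everything else follows at once from the linearity of $\sigma$ and the spread property of $\C$.
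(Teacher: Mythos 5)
Your proof is correct and follows essentially the same route the paper takes: the paper asserts this lemma as a direct consequence of Lemma~\ref{lem:Cstruct} and Corollary~\ref{cor:Cstruct}, and your argument is precisely the filled-in version of that derivation---classifying the codewords of $\C$ (the Type~C subspace $\V_{\bf 0}$ plus $2^k$ Type~A subspaces, of which exactly $2^k-1$ contain one vector with vanishing second half) and then tracking types through the coordinate swap. The only cosmetic difference is that you establish the spread property of $\C$ by a disjointness-plus-counting argument, whereas the paper obtains it from the parallel-class structure of Lemma~\ref{lem:resolv}; both are immediate.
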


Now, we are in a position to define $2^k-1$ pairwise disjoint spreads in
$\cG_2(2k,k)$. For our first spread $\dS_0$ defined as follows,
we distinguish between two cases:

\noindent
{\bf Case 1:} If there is no subspace in $\overleftrightarrow{\C}$
of the form
$\{ ( {\bf 0},{\bf 0}),(\alpha^0, \alpha^j ),(\alpha^1, \alpha^{j+1} ),\ldots,(\alpha^{2^k-2}, \alpha^{j+2^k-2}) \}$,
for any $j$, $0 \leq j \leq 2^k-2$, then
$$
\dS_0 \deff \Bigl\{ \{ ( {\bf 0},{\bf 0}),(\alpha^0,y_0 +\alpha^0),(\alpha^1,y_1 + \alpha^1),
(\alpha^2 , y_2 + \alpha^2 ),\ldots,(\alpha^{2^k-2},y_{2^k-2} +\alpha^{2^k-2}) \} ~:~
$$
$$
\{ ( {\bf 0},{\bf 0}),(\alpha^0,y_0),(\alpha^1,y_1),(\alpha^2,y_2),\ldots,(\alpha^{2^k-2},y_{2^k-2}) \} \in \overleftrightarrow{\C} \Bigr\}
$$
$$
\cup ~ \Bigl\{ \{ ( {\bf 0},{\bf 0}),( {\bf 0},z),(\alpha^{i_0},y_0+\alpha^{i_0}),(\alpha^{i_0},y_1+\alpha^{i_0}),
\ldots,(\alpha^{i_{2^{k-1}-2}},y_{2^k-4}+\alpha^{i_{2^{k-1}-2}}),(\alpha^{i_{2^{k-1}-2}},y_{2^k-3}+\alpha^{i_{2^{k-1}-2}}) \} ~:~
$$
$$
\{ ( {\bf 0},{\bf 0}),( {\bf 0},z),(\alpha^{i_0},y_0),(\alpha^{i_0},y_1),(\alpha^{i_1},y_2),(\alpha^{i_1},y_3),
\ldots,(\alpha^{i_{2^{k-1}-2}},y_{2^k-4}),(\alpha^{i_{2^{k-1}-2}},y_{2^k-3}) \} \in \overleftrightarrow{\C} \Bigr\} ~.
$$

\noindent
{\bf Case 2:} If there exists a subspace in $\overleftrightarrow{\C}$
of the form
$\{ ( {\bf 0},{\bf 0}),(\alpha^0, \alpha^j ),(\alpha^1, \alpha^{j+1} ),\ldots,(\alpha^{2^k-2}, \alpha^{j+2^k-2}) \}$
for some $j$, $0 \leq j \leq 2^k-2$, then
$$
\dS_0 \deff \Bigl\{ \{ ( {\bf 0},{\bf 0}),(\alpha^0,y_0 +\alpha^0),(\alpha^1,y_1 + \alpha^2),
(\alpha^2 ,y_2 +\alpha^4 ),\ldots,(\alpha^{2^k-2},y_{2^k-2} +\alpha^{2^k-3}) \} ~:~
$$
$$
\{ ( {\bf 0},{\bf 0}),(\alpha^0,y_0),(\alpha^1,y_1),(\alpha^2,y_2),\ldots,(\alpha^{2^k-2},y_{2^k-2}) \} \in \overleftrightarrow{\C} \Bigr\}
$$
$$
\cup ~\Bigl\{ \{ ( {\bf 0},{\bf 0}),( {\bf 0},z),(\alpha^{i_0},y_0+\alpha^{2 \cdot i_0}),(\alpha^{i_0},y_1+\alpha^{2 \cdot i_0}),
\ldots,(\alpha^{i_{2^{k-1}-2}},y_{2^k-4}+\alpha^{2 \cdot i_{2^{k-1}-2}}),(\alpha^{i_{2^{k-1}-2}},y_{2^k-3}+\alpha^{2 \cdot i_{2^{k-1}-2}}) \} ~:~
$$
$$
\{ ( {\bf 0},{\bf 0}),( {\bf 0},z),(\alpha^{i_0},y_0),(\alpha^{i_0},y_1),
\ldots,(\alpha^{i_{2^{k-1}-2}},y_{2^k-4}),(\alpha^{i_{2^{k-1}-2}},y_{2^k-3}) \} \in \overleftrightarrow{\C} \Bigr\}~.
$$

The following two lemmas can be easily verified.

\begin{lemma}
\label{lem:add2_1}
If $\{ ({\bf 0},{\bf 0}),( {\bf 0},z), (\alpha^{i_0}, y_0), (\alpha^{i_0},y_1) , \ldots ,
(\alpha^{i_{2^{\ell-1}-2}},y_{2^\ell-4}),(\alpha^{i_{2^{\ell-1}-2}},y_{2^\ell-3})   \}$ is
an $\ell$-dimensional subspace and $\{ ({\bf 0},{\bf 0}), (\alpha^{i_0}, v_0), \ldots ,
(\alpha^{i_{2^{\ell-1}-2}},v_{2^{\ell-1}-2})  \}$ is an $(\ell-1)$-dimensional
subspace then
$$\{ ({\bf 0},{\bf 0}),( {\bf 0},z), (\alpha^{i_0}, y_0+v_0), (\alpha^{i_0},y_1+v_0) , \ldots ,
(\alpha^{i_{2^{\ell-1}-2}},y_{2^\ell-4}+v_{2^{\ell-1}-2}),(\alpha^{i_{2^{\ell-1}-2}},y_{2^\ell-3}+v_{2^{\ell-1}-2})   \}$$
is an $\ell$-dimensional subspace.
\end{lemma}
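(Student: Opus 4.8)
The plan is to exhibit the set in the conclusion as the image of the hypothesized subspace under an explicit invertible linear map of $\F_2^{2k}$, so that it is automatically a subspace of the same dimension. Write $\pi(x,y)=x$ for the projection onto the first $k$ coordinates, denote by $W$ the $\ell$-dimensional subspace in the hypothesis, and denote by $U$ the $(\ell-1)$-dimensional subspace $\{ ({\bf 0},{\bf 0}),(\alpha^{i_0},v_0),\ldots,(\alpha^{i_{2^{\ell-1}-2}},v_{2^{\ell-1}-2}) \}$. First I would observe that $\pi(U)=\{ {\bf 0},\alpha^{i_0},\ldots,\alpha^{i_{2^{\ell-1}-2}} \}$ is an $(\ell-1)$-dimensional subspace $V$ of $\F_2^k$, and that $\pi$ restricted to $U$ is a bijection onto $V$, since each nonzero first coordinate occurs exactly once in $U$. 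Consequently $U$ is the graph $\{ (w,s(w)) : w\in V \}$ of the well-defined map $s\colon V\to\F_2^k$ given by $s({\bf 0})={\bf 0}$ and $s(\alpha^{i_j})=v_j$; moreover, the assumption that $U$ is a subspace is \emph{equivalent} to $s$ being $\F_2$-linear, because closure of $U$ under addition reads exactly $s(w_1+w_2)=s(w_1)+s(w_2)$.

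Next I would extend $s$ to an $\F_2$-linear map $\tilde s\colon\F_2^k\to\F_2^k$ (choose a basis of $V$ among the $\alpha^{i_j}$, complete it to a basis of $\F_2^k$, and set $\tilde s$ to zero on the new basis vectors), and define $\Phi\colon\F_2^{2k}\to\F_2^{2k}$ by $\Phi(x,y)=(x,\,y+\tilde s(x))$. This $\Phi$ is $\F_2$-linear, and over $\F_2$ it is its own inverse, hence a bijection. The key step is then a termwise check that $\Phi(W)$ equals the set $W'$ in the conclusion: indeed $\Phi({\bf 0},{\bf 0})=({\bf 0},{\bf 0})$ and $\Phi({\bf 0},z)=({\bf 0},z)$ because $\tilde s({\bf 0})={\bf 0}$, while for each $j$ one has $\Phi(\alpha^{i_j},y_{2j})=(\alpha^{i_j},y_{2j}+v_j)$ and $\Phi(\alpha^{i_j},y_{2j+1})=(\alpha^{i_j},y_{2j+1}+v_j)$ because $\tilde s(\alpha^{i_j})=s(\alpha^{i_j})=v_j$. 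These are precisely the vectors listed in $W'$.

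Finally, since $\Phi$ is a linear bijection and $W$ is an $\ell$-dimensional subspace, $W'=\Phi(W)$ is an $\ell$-dimensional subspace, which is the claim. I expect the only real obstacle to be the bookkeeping that aligns the section map with the listed shifts, namely verifying that the nonzero first coordinates of $U$ are exactly the distinct values $\alpha^{i_0},\ldots,\alpha^{i_{2^{\ell-1}-2}}$ appearing in $W$, so that $\tilde s(\alpha^{i_j})$ really equals the intended $v_j$. Everything else is formal: no structural facts about $W$ (such as the relation $y_{2j}+y_{2j+1}=z$) are needed, because $\Phi$ carries subspaces to subspaces regardless.
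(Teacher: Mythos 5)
Your proposal is correct and complete. Note, however, that the paper offers no proof to compare against: this lemma and Lemma~\ref{lem:add2_2} are introduced by ``The following two lemmas can be easily verified'' and left to the reader. The verification the author presumably intends is the direct one: closure of the new set under addition follows by combining closure of the given $\ell$-dimensional subspace (to locate $y_{2j}+y_{2j'}$ among the vectors attached to the first coordinate $\alpha^{i_j}+\alpha^{i_{j'}}$) with closure of the $(\ell-1)$-dimensional subspace (to identify $v_j+v_{j'}$ as the shift attached to that same first coordinate), the dimension count then coming from the distinctness of the $2^\ell$ listed vectors. Your argument packages all of this bookkeeping into one conceptual step: the $(\ell-1)$-dimensional subspace, having no nonzero vector with first coordinate ${\bf 0}$, is the graph of an $\F_2$-linear map $s$, and the shear $\Phi(x,y)=(x,\,y+\tilde s(x))$ is a linear involution of $\F_2^{2k}$ carrying the hypothesized $\ell$-dimensional subspace onto the displayed set. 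This is cleaner than the termwise check, it makes transparent why no internal structure of the Type-B subspace (e.g.\ the relation $y_{2j}+y_{2j+1}=z$ from Lemma~\ref{lem:TypeB_structure}) is needed, and the identical argument proves Lemma~\ref{lem:add2_2} as well, with the second subspace there playing the role of the graph. The two points you defer as bookkeeping --- that the nonzero first coordinates of the small subspace are distinct and coincide with the $\alpha^{i_j}$ appearing in the large one --- are indeed forced by the hypotheses: two distinct elements of a subspace sharing a first coordinate would sum to a nonzero element with first coordinate ${\bf 0}$, which the listed set excludes, and the matching of the $\alpha^{i_j}$ is exactly what the lemma's shared notation encodes.
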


\begin{lemma}
\label{lem:add2_2}
If $\{ ({\bf 0},{\bf 0}), (\alpha^{i_0}, y_0), (\alpha^{i_1},y_1) , \ldots ,
(\alpha^{i_{2^\ell-2}},y_{2^\ell-2})  \}$ and
$\{ ({\bf 0},{\bf 0}), (\alpha^{i_0}, v_0), (\alpha^{i_1},v_1) , \ldots ,
(\alpha^{i_{2^\ell-2}},v_{2^\ell-2})  \}$ are
two distinct $\ell$-dimensional subspaces then
$$\{ ({\bf 0},{\bf 0}), (\alpha^{i_0}, y_0+v_0), (\alpha^{i_1},y_1+v_1) , \ldots ,
(\alpha^{i_{2^\ell-2}},y_{2^\ell-2}+v_{2^\ell-2})  \}$$
is an $\ell$-dimensional subspace.
\end{lemma}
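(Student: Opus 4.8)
The plan is to exploit the fact that the two subspaces are written with exactly the same ordered list of first coordinates $\alpha^{i_0},\dots,\alpha^{i_{2^\ell-2}}$, every one of which is nonzero because $\alpha$ is primitive. First I would look at the linear projection $\pi\colon(a,b)\mapsto a$ onto the first coordinate. Its restriction to either of the two subspaces is injective, since a nonzero kernel vector would have to be of the form $({\bf 0},b)$ with $b\neq{\bf 0}$, whereas every listed nonzero vector has first coordinate some $\alpha^{i_j}\neq{\bf 0}$. Hence the $2^\ell-1$ first coordinates $\alpha^{i_j}$ are pairwise distinct, the set $W\deff\Span{\alpha^{i_0},\dots,\alpha^{i_{2^\ell-2}}}$ is an $\ell$-dimensional subspace of $\F_2^k$, and each of the two given subspaces projects bijectively onto $W$. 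In other words, the first is the graph of a linear map $f\colon W\to\F_2^k$ with $f(\alpha^{i_j})=y_j$, and the second is the graph of a linear map $g\colon W\to\F_2^k$ with $g(\alpha^{i_j})=v_j$.

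The core of the argument is a bookkeeping identity on the indices. Fix $a\neq b$. Because the first subspace is additively closed and $\alpha^{i_a}+\alpha^{i_b}\neq{\bf 0}$, the sum $(\alpha^{i_a},y_a)+(\alpha^{i_b},y_b)=(\alpha^{i_a}+\alpha^{i_b},\,y_a+y_b)$ is a nonzero element of it, so it equals $(\alpha^{i_c},y_c)$ for the unique $c$ with $\alpha^{i_c}=\alpha^{i_a}+\alpha^{i_b}$; thus $y_c=y_a+y_b$. The decisive observation is that this index $c$ is determined by the first coordinates alone, so repeating the computation inside the second subspace — which carries the identical first coordinates — produces $v_c=v_a+v_b$ with the \emph{same} $c$. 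Summing the two relations gives
$$
(\alpha^{i_a},y_a+v_a)+(\alpha^{i_b},y_b+v_b)=(\alpha^{i_c},\,y_c+v_c),
$$
which is again one of the listed vectors of the claimed set; the degenerate case $a=b$ and the cases involving $({\bf 0},{\bf 0})$ are immediate.

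To finish, I would record that the claimed set contains $({\bf 0},{\bf 0})$ and is closed under addition, and that over $\F_2$ this already makes it a subspace, since scalar multiplication by $0$ and $1$ is trivially absorbed. Its $2^\ell-1$ nonzero vectors $(\alpha^{i_j},y_j+v_j)$ have pairwise distinct first coordinates, hence are distinct, so the set has exactly $2^\ell$ elements and dimension $\ell$. Equivalently, in the language of the first paragraph the claimed set is precisely the graph of $f+g$, which is automatically the graph of a linear map on the $\ell$-dimensional domain $W$. The one step needing care — the only real obstacle — is the index matching: one must genuinely use that both subspaces are presented with a common ordered list of first coordinates, so that $\alpha^{i_c}=\alpha^{i_a}+\alpha^{i_b}$ picks out the same $c$ on both sides; after that the statement collapses to additive closure over $\F_2$. (Distinctness of the two subspaces, though assumed, is not actually used: were they equal, the claimed set would simply be $W\times\{{\bf 0}\}$.)
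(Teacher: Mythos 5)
Your proof is correct. Note that the paper itself offers no argument for this lemma: it is one of the two statements dismissed with ``the following two lemmas can be easily verified,'' so there is no proof to compare against, and your write-up supplies the omitted verification. Your argument is sound at every step: the projection onto the first coordinate is injective on each given subspace, so each is the graph of a linear map $f,g\colon W\to\F_2^k$ on the $\ell$-dimensional subspace $W=\spn\{\alpha^{i_0},\dots,\alpha^{i_{2^\ell-2}}\}$, and the claimed set is exactly the graph of $f+g$, hence an $\ell$-dimensional subspace; the index-matching point you isolate (that $\alpha^{i_c}=\alpha^{i_a}+\alpha^{i_b}$ selects the \emph{same} index $c$ in both lists, because both subspaces are presented over the identical ordered list of first coordinates) is precisely what makes the coordinatewise-sum construction work, and it is also what the companion Lemma~\ref{lem:add2_1} relies on. Your closing remark that the distinctness hypothesis is never used is likewise accurate: if $f=g$ the sum set is the graph of the zero map, $\{(w,{\bf 0}) : w\in W\}$, still $\ell$-dimensional, so the hypothesis is merely inherited from how the lemma is invoked in the construction of $\dS_0$ rather than logically necessary.
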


\begin{lemma}
$\dS_0$ is a spread.
\end{lemma}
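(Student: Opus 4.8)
The plan is to observe that, in each of the two cases, the set $\dS_0$ is simply the image of the spread $\overleftrightarrow{\C}$ under one fixed invertible $\F_2$-linear transformation of $\F_2^{2k}$, and then to invoke the elementary fact that an invertible linear map carries a spread to a spread. Write every vector of $\F_2^{2k}$ as a pair $(x,y)$ with $x,y \in \F_2^k$, and identify $\F_2^k$ with $\F_{2^k}$ as in Section~\ref{sec:conf}. Define $T(x,y) \deff (x,\,x+y)$ in Case~1 and $T(x,y) \deff (x,\,x^2+y)$ in Case~2, where $x^2$ is the square of $x$ in $\F_{2^k}$.

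First I would check that $\dS_0 = \{ T(Y) : Y \in \overleftrightarrow{\C} \}$ in both cases, which is a matter of matching $T$ against the two displayed unions. A Type~A codeword is $\{({\bf 0},{\bf 0}),(\alpha^0,y_0),\ldots,(\alpha^{2^k-2},y_{2^k-2})\}$, and $T$ sends each $(\alpha^i,y_i)$ to $(\alpha^i,y_i+\alpha^i)$ in Case~1 and to $(\alpha^i,y_i+\alpha^{2i})$ in Case~2, using $(\alpha^i)^2 = \alpha^{2i}$; a Type~B codeword contains $({\bf 0},{\bf 0})$ and $({\bf 0},z)$, which $T$ fixes because $x={\bf 0}$ produces no shift, while each $(\alpha^{i_j},y)$ is sent to $(\alpha^{i_j},y+\alpha^{i_j})$, resp.\ $(\alpha^{i_j},y+\alpha^{2i_j})$. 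These are exactly the two unions defining $\dS_0$, and by Lemma~\ref{lem:C_two_types} the codewords of $\overleftrightarrow{\C}$ are exhausted by the Type~A and Type~B cases, so no codeword is missed.

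The key step is then that $T$ is an invertible $\F_2$-linear map. Linearity in Case~1 is immediate; in Case~2 it holds because the Frobenius map $x \mapsto x^2$ on $\F_{2^k}$ is additive in characteristic $2$ and hence $\F_2$-linear. In both cases $T^2$ is the identity, so $T$ is a bijection of $\F_2^{2k}$ fixing ${\bf 0}$. Consequently each $T(Y)$ is again a $k$-dimensional subspace, and, since the subspaces $Y \in \overleftrightarrow{\C}$ partition the nonzero vectors of $\F_2^{2k}$, their images $T(Y)$ partition these nonzero vectors as well; that is, $\dS_0$ is a spread.

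I expect the only real work to be bookkeeping: lining up the two-line unions with $T$ and, in Case~2, spotting that the shift $\alpha^i \mapsto \alpha^{2i}$ is the Frobenius square, so that $T$ remains linear. I would also remark that the case split and the hypotheses distinguishing the two cases play no role in the spread property itself; they matter only later, in guaranteeing that $\dS_0,\ldots,\dS_{2^k-2}$ are pairwise disjoint. Thus a reader should not be misled into thinking Lemmas~\ref{lem:add2_1} and~\ref{lem:add2_2} are needed here: the termwise verifications that each $T(Y)$ is a subspace are entirely subsumed by the single observation that $T$ is an invertible linear map.
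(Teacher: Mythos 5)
Your proof is correct, and it takes a cleaner route than the paper's. The paper proves this lemma in three steps: it cites Lemma~\ref{lem:C_two_types} to get that $\overleftrightarrow{\C}$ is a spread, invokes the two componentwise-addition lemmas (Lemmas~\ref{lem:add2_1} and~\ref{lem:add2_2}) to conclude that each element of $\dS_0$ is a $k$-dimensional subspace, and then asserts that disjointness of codewords is preserved under the construction, with coverage of all nonzero vectors left implicit (it follows by counting: $2^k+1$ pairwise disjoint $k$-subspaces cover all $2^{2k}-1$ nonzero vectors). You instead identify the entire construction as the image of $\overleftrightarrow{\C}$ under the single shear $T(x,y)=(x,\,y+x)$ in Case~1 and $T(x,y)=(x,\,y+x^2)$ in Case~2, and your verification that these match the displayed unions is accurate (including the exponent bookkeeping $(\alpha^i)^2=\alpha^{2i}$, e.g.\ $\alpha^{2(2^k-2)}=\alpha^{2^k-3}$, and the fact that $T$ fixes the vectors $({\bf 0},z)$). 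The underlying mathematics is the same --- the paper's Lemmas~\ref{lem:add2_1} and~\ref{lem:add2_2} amount to adding the graph of a linear map termwise, which is exactly your $T$ --- but your packaging is genuinely better for this lemma: the key observation that Frobenius is additive in characteristic $2$ makes $T$ an invertible $\F_2$-linear involution, and then the subspace property, pairwise disjointness, \emph{and} the coverage of all nonzero vectors follow simultaneously from the fact that an invertible linear map fixing ${\bf 0}$ is a bijection on nonzero vectors, rather than resting on two auxiliary lemmas plus an ``easy to verify'' disjointness claim plus counting. Your closing remark is also correct: the case split is irrelevant to the spread property itself and only matters later, when the paper must ensure that the two Type~A subspaces of $\dS_0$ do not lie on a common ``$\alpha$-orbit,'' which is what makes the spreads $\dS_{i_1}$ and $\dS_{i_2}$ disjoint.
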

\begin{proof}
By Lemma~\ref{lem:C_two_types}, $\overleftrightarrow{\C}$ is a spread.
By Lemmas~\ref{lem:add2_1} and~\ref{lem:add2_2}, the elements defined in
$\dS_0$ are $k$-dimensional subspaces. It is easy to verify by the definition
of $\dS_0$ that if $X$ and $Y$ are two disjoint $k$-dimensional subspaces
of $\overleftrightarrow{\C}$ then their related $k$-dimensional subspaces
$X'$ and $Y'$, respectively (constructed from $X$ and $Y$, respectively)
in $\dS_0$ are also disjoint. Therefore, $\dS_0$ is a spread.
\end{proof}

By Lemma~\ref{lem:C_two_types} and by the definition of $\dS_0$ we have that
\begin{lemma}
\label{lem:S0_two_types}
The spread $\dS_0$ consists
of exactly $2^k-1$ subspaces of Type B and exactly two subspaces of Type A.
\end{lemma}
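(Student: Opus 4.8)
The plan is to recognize that $\dS_0$ is nothing but the image of the spread $\overleftrightarrow{\C}$ under a single invertible $\F_2$-linear map of $\F_2^{2k}$, and that this map leaves the type of every subspace unchanged. First I would observe that the three types are distinguished purely by the dimension of the intersection with $\V_{\bf 0}$: a $k$-subspace $Y$ is of Type A iff $\dim(Y \cap \V_{\bf 0}) = 0$, of Type B iff $\dim(Y \cap \V_{\bf 0}) = 1$ (over $\F_2$, meeting $\V_{\bf 0}$ in exactly one nonzero vector forces a $1$-dimensional intersection), and of Type C iff $\dim(Y \cap \V_{\bf 0}) = k$. Hence, to prove the lemma it suffices to show that the passage from $\overleftrightarrow{\C}$ to $\dS_0$ preserves $\dim(Y \cap \V_{\bf 0})$ for each codeword $Y$, since Lemma~\ref{lem:C_two_types} already records that $\overleftrightarrow{\C}$ has exactly $2^k-1$ subspaces with this dimension equal to $1$ and exactly two with it equal to $0$.

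Next I would make the map explicit. Writing every vector of $\F_2^{2k}$ as a pair $(x,y)$ with $x,y \in \F_{2^k}$, the two branches of the union defining $\dS_0$ act in the same way on every codeword: in Case 1 each vector $(x,y)$ is replaced by $(x, y+x)$, and in Case 2 by $(x, y+x^2)$. The first branch of each union covers the Type A codewords, where every nonzero first coordinate $\alpha^i$ occurs exactly once, and the second branch covers the Type B codewords described by Lemma~\ref{lem:TypeB_structure}; together, by Lemma~\ref{lem:C_two_types}, they exhaust $\overleftrightarrow{\C}$. Since the Frobenius map $x \mapsto x^2$ is $\F_2$-linear, both prescriptions define $\F_2$-linear bijections $\phi,\psi$ of $\F_2^{2k}$, so that $\dS_0 = \phi(\overleftrightarrow{\C})$ in Case 1 and $\dS_0 = \psi(\overleftrightarrow{\C})$ in Case 2.

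The crux is the behaviour of $\phi$ and $\psi$ on $\V_{\bf 0}$. When $x={\bf 0}$ we add ${\bf 0}$ (respectively ${\bf 0}^2={\bf 0}$) to $y$, so both maps fix every vector of $\V_{\bf 0}$. I would then argue that a map $f \in \{\phi,\psi\}$ that fixes $\V_{\bf 0}$ pointwise satisfies $f(Y) \cap \V_{\bf 0} = Y \cap \V_{\bf 0}$ for every subspace $Y$: if $v \in Y \cap \V_{\bf 0}$ then $f(v)=v \in f(Y)$, and conversely if $v \in f(Y) \cap \V_{\bf 0}$ then $f^{-1}(v)=v$ already lies in $Y$. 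Consequently $\dim(f(Y) \cap \V_{\bf 0}) = \dim(Y \cap \V_{\bf 0})$ for every $Y \in \overleftrightarrow{\C}$, so $\dS_0$ inherits the type distribution of $\overleftrightarrow{\C}$ and has exactly $2^k-1$ subspaces of Type B and exactly two of Type A. I do not anticipate a genuine obstacle here; the only point demanding care is to confirm that both branches of each union really implement the same linear map and that this map restricts to the identity on $\V_{\bf 0}$ — everything else is bookkeeping with the characterization of the types by $\dim(\,\cdot \cap \V_{\bf 0})$.
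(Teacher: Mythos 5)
Your proof is correct and is essentially the paper's own (unstated) argument made rigorous: the paper asserts the lemma simply ``by Lemma~\ref{lem:C_two_types} and by the definition of $\dS_0$,'' the intended point being that the passage from $\overleftrightarrow{\C}$ to $\dS_0$ cannot change the type of any subspace. Your formalization of that step --- observing that both cases of the construction apply a single $\F_2$-linear bijection, $(x,y)\mapsto(x,y+x)$ or $(x,y)\mapsto(x,y+x^2)$, which fixes $\V_{\bf 0}$ pointwise and therefore preserves $\dim(Y\cap\V_{\bf 0})$, which in turn characterizes the types --- is a valid and clean way to justify exactly what the paper leaves implicit.
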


By Lemma~\ref{lem:C_two_types} and by the definition of $\dS_0$ we also have that
\begin{lemma}
\label{lem:S0_typeA}
No subspace in $\dS_0$ has the form
$\{ ( {\bf 0},{\bf 0}),(x_0,{\bf 0}),(x_1,{\bf 0}),\ldots,(x_{2^k-2},{\bf 0}) \}$.
At most one of the subspaces of Type A in $\dS_0$ has the form
$\{ ( {\bf 0},{\bf 0}),(\alpha^0, \alpha^j ),(\alpha^1, \alpha^{j+1} ),\ldots,(\alpha^{2^k-2}, \alpha^{j+2^k-2}) \}$,
for some $j$, $0 \leq j \leq 2^k-2$.
\end{lemma}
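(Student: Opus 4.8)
The plan is to recognize the passage from $\overleftrightarrow{\C}$ to $\dS_0$ as the application of a single invertible $\F_2$-linear map, after which both assertions become membership questions that can be pushed back to $\overleftrightarrow{\C}$. Identifying $\F_2^k$ with $\F_{2^k}$ and writing vectors of $\F_2^{2k}$ as pairs $(x,y)$ with $x,y\in\F_{2^k}$, the defining formulas for $\dS_0$ add $g(x)$ to the second coordinate of every vector $(x,y)$, where $g(x)=x$ in Case~1 and $g(x)=x^2$ in Case~2 (recall that $\alpha^{2i}=(\alpha^i)^2$, and that $g$ fixes the vectors $({\bf 0},{\bf 0})$ and $({\bf 0},z)$). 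Since both $x\mapsto x$ and the Frobenius $x\mapsto x^2$ are $\F_2$-linear, the map $\phi(x,y)=(x,y+g(x))$ is an $\F_2$-linear involution of $\F_2^{2k}$ fixing every first coordinate; hence $\dS_0=\{\phi(W):W\in\overleftrightarrow{\C}\}$, the map $\phi$ carries each of Types~A, B, C onto itself, and $\phi^{-1}=\phi$. First I would record this reformulation.

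Next I would fix notation for the subspaces involved. For $\beta\in\F_{2^k}$ set $L_\beta=\{(x,\beta x):x\in\F_{2^k}\}$; then $L_0$ is the all-zero-$y$ subspace $\{({\bf 0},{\bf 0}),(x_0,{\bf 0}),\ldots,(x_{2^k-2},{\bf 0})\}$ of the first assertion, and the subspaces of the diagonal form in the statement are exactly the $L_{\alpha^j}$ for $0\le j\le 2^k-2$. Write also $Q=\{(x,x^2):x\in\F_{2^k}\}$. The one non-formal ingredient I need is that the Frobenius graph is never a scalar graph once $k\ge 2$: for all $\gamma,\beta\in\F_{2^k}$ one has $\{(x,\gamma x+x^2):x\in\F_{2^k}\}\ne L_\beta$, since $\gamma x+x^2=\beta x$ for every $x$ would force $x=\beta+\gamma$ for every nonzero $x$, which is impossible as soon as $\F_{2^k}$ has more than one nonzero element. (For $k=1$ the diagonal form is a single fixed line and both assertions are vacuous.)

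With these in hand both assertions reduce to short computations. For the first, $L_0\in\dS_0$ iff $\phi(L_0)\in\overleftrightarrow{\C}$, and $\phi(L_0)=L_{\alpha^0}$ in Case~1 while $\phi(L_0)=Q$ in Case~2. In Case~1 the hypothesis forbids every $L_{\alpha^j}$ from lying in $\overleftrightarrow{\C}$; in Case~2 the two Type~A members of $\overleftrightarrow{\C}$ are $L_0$ (guaranteed by Lemma~\ref{lem:C_two_types}) and the diagonal $L_{\alpha^{j_0}}$ furnished by the Case~2 hypothesis, and $Q$ is a Type~A subspace equal to neither, so $Q\notin\overleftrightarrow{\C}$. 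Either way $L_0\notin\dS_0$. For the second assertion, by Lemma~\ref{lem:S0_two_types} the two Type~A members of $\dS_0$ are $\phi(A_1)$ and $\phi(A_2)$, where $A_1=L_0$ and $A_2$ are the two Type~A members of $\overleftrightarrow{\C}$ (with $A_1$ the one guaranteed by Lemma~\ref{lem:C_two_types}). In Case~1, $\phi(A_1)=L_{\alpha^0}$ is of diagonal form, whereas $\phi(A_2)=L_{\alpha^j}$ would force $A_2=\phi(L_{\alpha^j})=L_{\alpha^j+1}$, which is either $L_0=A_1$ (excluded, as $A_2\ne A_1$) or a nonzero diagonal forbidden by the Case~1 hypothesis; hence exactly one of the two is of diagonal form. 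In Case~2 one has $A_2=L_{\alpha^{j_0}}$, so $\phi(A_1)=Q$ and $\phi(A_2)=\{(x,\alpha^{j_0}x+x^2):x\in\F_{2^k}\}$, and by the algebraic fact above neither is of diagonal form; hence none is. In both cases at most one Type~A member of $\dS_0$ has the diagonal form.

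The main obstacle is the bookkeeping that keeps the two cases honest. One must check that the piecewise definition of $\dS_0$ genuinely comes from one global linear map (so that $\phi^{-1}=\phi$ and the three Types are permuted trivially), and then identify correctly which Type~A subspace of $\overleftrightarrow{\C}$ is $A_2$ in Case~2, using that $\overleftrightarrow{\C}$ possesses only two Type~A members. The essential, non-bookkeeping point is the separation between the Frobenius twist $x\mapsto x^2$ and scalar multiplication for $k\ge 2$: this is exactly what prevents $\phi(A_1)$ and $\phi(A_2)$ from simultaneously landing on diagonal-form subspaces, and it is the crux of the statement.
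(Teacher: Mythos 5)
Your proposal is correct and takes essentially the approach the paper intends: the paper states this lemma without proof, as an immediate consequence of Lemma~\ref{lem:C_two_types} and the definition of $\dS_0$, and your argument via the global linear involution $\phi(x,y)=(x,y+g(x))$ (with $g(x)=x$ in Case~1, $g(x)=x^2$ in Case~2) is exactly that consequence made explicit --- indeed your key computations ($\phi(L_0)=L_{\alpha^0}$ in Case~1, $\phi(L_0)=Q$ and $\phi(L_{\alpha^{j_0}})=\{(x,\alpha^{j_0}x+x^2)\}$ in Case~2, neither a scalar graph) are precisely the subspaces the paper writes out later in Cases~2.1 and~2.2 of its disjointness proof. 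One harmless inaccuracy: for $k=1$ the two assertions are not actually vacuous, but that case is moot since the construction of Section~\ref{sec:n=2k} requires a $[k\times k,\,2k,\,k-1]_2$ MRD code, which forces $k\geq 2$.
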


\begin{lemma}
Let $X_1,X_2,\ldots , X_{2^\ell-1}$ be $2^\ell-1$ $(\ell-1)$-dimensional
subspaces of $F_2^\ell$. If each nonzero element of $\F_2^\ell$ is contained
in exactly $2^{\ell-1}-1$ subspaces of these $2^\ell-1$ subspaces then
$X_1,X_2,\ldots , X_{2^\ell-1}$ are distinct $(\ell-1)$-dimensional subspaces.
\end{lemma}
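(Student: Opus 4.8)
The plan is to treat $X_1,\dots,X_{2^\ell-1}$ as a \emph{list} (multiset) of hyperplanes of $\F_2^\ell$ and to show that no two entries coincide, via a double-counting argument. The case $\ell=1$ is trivial (there is a single subspace), so I would assume $\ell\ge 2$. First I would record three standard facts about hyperplanes of $\F_2^\ell$: there are exactly $2^\ell-1$ of them; each contains exactly $2^{\ell-1}-1$ nonzero vectors; and two \emph{distinct} hyperplanes $X_i\neq X_j$ have $X_i+X_j=\F_2^\ell$, so $\dim(X_i\cap X_j)=\ell-2$ and $X_i\cap X_j$ contains exactly $2^{\ell-2}-1$ nonzero vectors. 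Let $P$ be the number of pairs $i<j$ with $X_i=X_j$; then the assertion ``the $X_i$ are distinct'' is equivalent to $P=0$.

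Next I would double-count the quantity $T\deff\#\{(v,\{i,j\}) : i<j,\ v\neq {\bf 0},\ v\in X_i\cap X_j\}$. Summing first over $v$ and using the hypothesis that every nonzero $v$ lies in exactly $2^{\ell-1}-1$ of the listed subspaces gives
$$T=(2^\ell-1)\binom{2^{\ell-1}-1}{2}.$$
Summing instead over pairs $\{i,j\}$ and splitting according to whether $X_i=X_j$ gives
$$T=P\,(2^{\ell-1}-1)+\left(\binom{2^\ell-1}{2}-P\right)(2^{\ell-2}-1).$$
Equating the two expressions isolates $P$, since the coefficient of $P$ is $(2^{\ell-1}-1)-(2^{\ell-2}-1)=2^{\ell-2}$:
$$P\cdot 2^{\ell-2}=(2^\ell-1)\binom{2^{\ell-1}-1}{2}-\binom{2^\ell-1}{2}(2^{\ell-2}-1).$$

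The main point — and the only step requiring real care — is to verify that the right-hand side vanishes. Writing $N=2^\ell-1$, $a=2^{\ell-1}-1$, $b=2^{\ell-2}-1$, I would use the two elementary identities $N-1=2a$ and $a-1=2b$ to compute
$$N\binom{a}{2}-\binom{N}{2}b=\frac{N}{2}\bigl[a(a-1)-(N-1)b\bigr]=\frac{N}{2}\bigl[a(a-1)-2ab\bigr]=\frac{Na}{2}\bigl[(a-1)-2b\bigr]=0.$$
Since $2^{\ell-2}\ge 1$ for $\ell\ge 2$, this forces $P=0$, i.e.\ the $X_i$ are pairwise distinct, completing the argument.

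Conceptually this vanishing is not accidental: the family of \emph{all} $2^\ell-1$ distinct hyperplanes already covers each nonzero point exactly $2^{\ell-1}-1$ times and has $P=0$, so the right-hand side, being independent of the particular list, must be $0$. An alternative route avoids the explicit identity: the point-hyperplane incidence matrix $M$ of $\F_2^\ell$ is square and satisfies $MM^{\mathsf T}=2^{\ell-2}I+(2^{\ell-2}-1)J$, which is positive definite and hence nonsingular, so from $M(\mathbf{m}-\mathbf{1})={\bf 0}$ the multiplicity vector $\mathbf{m}$ of the list is forced to be the all-ones vector.
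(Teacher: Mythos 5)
Your proof is correct and takes essentially the same route as the paper: both are elementary double-counting arguments resting on the same two facts, namely that each nonzero vector lies in exactly $2^{\ell-1}-1$ members of the list and that two distinct hyperplanes of $\F_2^\ell$ share exactly $2^{\ell-2}-1$ nonzero vectors. The only difference is bookkeeping: the paper fixes each $X_r$ and counts $\sum_{i} \left| X_i \cap X_r \right|$ in two ways to force each multiplicity $\lambda_r = 1$, whereas you count point--pair incidences over all pairs at once to force the total number $P$ of coincident pairs to vanish; your closing incidence-matrix remark is a pleasant alternative but plays no role in the main argument.
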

\begin{proof}
First note the $\dim (X_i \cap X_j) = \ell-2$ for $1 \leq i < j \leq 2^\ell-1$.
Hence, for any given $r$, $1 \leq r \leq 2^\ell-1$,
\begin{equation}
\label{eq:cont1}
\sum_{i=1}^{2^\ell-1} | X_i \cap X_r | = 2^\ell-1 + \lambda_r (2^{\ell-1}-1) + (2^\ell-1 -\lambda_r) (2^{\ell-2}-1)~,
\end{equation}
where $\lambda_r$ is the number of subspaces in $X_1,X_2,\ldots , X_{2^\ell-1}$ which equals $X_r$.
On the other hand, since each nonzero element of $X_r$ is contained
in exactly $2^{\ell-1}-1$ of these $2^\ell-1$ subspaces then
\begin{equation}
\label{eq:cont2}
\sum_{i=1}^{2^\ell-1} | X_i \cap X_r | = 2^\ell-1 +(2^{\ell-1}-1)(2^{\ell-1}-1)~.
\end{equation}
The solution for the equations (\ref{eq:cont1}) and (\ref{eq:cont2}) is $\lambda_r=1$ which proves the lemma.
\end{proof}

\begin{cor}
\label{cor:S0_typeB}
Let
$$\{ ( {\bf 0},{\bf 0}),({\bf 0},z_1),(x_0,y_0),(x_0,y_1),(x_1,y_2),(x_1,y_3),\ldots,(x_{2^{k-1}-2},y_{2^k-4}),(x_{2^{k-1}-2},y_{2^k-3}) \}$$
and
$$\{ ( {\bf 0},{\bf 0}),({\bf 0},z_2),(u_0,v_0),(u_0,v_1),(u_1,v_2),(u_1,v_3),\ldots,(u_{2^{k-1}-2},v_{2^k-4}),(u_{2^{k-1}-2},v_{2^k-3}) \},$$
be two subspaces of Type B in $\dS_0$.
Then, the $(k-1)$-dimensional subspaces $\{ {\bf 0},x_0,x_1,\ldots,x_{2^{k-1}-2} \}$
and $\{ {\bf 0},u_0,u_1,\ldots,u_{2^{k-1}-2} \}$ are not equal.
\end{cor}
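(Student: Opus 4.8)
The plan is to realize this as a direct application of the (unnumbered) counting lemma immediately preceding the statement, taken with $\ell=k$. First I would collect the $2^k-1$ subspaces of Type B in $\dS_0$ and, for each of them, form its projection onto the first $k$ coordinates. By Lemma~\ref{lem:TypeB_structure} each such projection is precisely a $(k-1)$-dimensional subspace of $\F_2^k$, namely a set of the form $\{ {\bf 0},x_0,x_1,\ldots,x_{2^{k-1}-2} \}$; call these subspaces $X_1,X_2,\ldots,X_{2^k-1}$. The corollary asserts that any two of them are distinct, so it suffices to show that $X_1,\ldots,X_{2^k-1}$ satisfy the hypothesis of the counting lemma, i.e.\ that every nonzero $x\in\F_2^k$ lies in exactly $2^{k-1}-1$ of them.

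The heart of the argument is a covering count. Fix a nonzero $x\in\F_2^k$ and consider the set $\V_x^{(2k,k)}$, which consists of the $2^k$ vectors of $\F_2^{2k}$ whose first $k$ coordinates equal $x$. Since $\dS_0$ is a spread, each of these $2^k$ vectors lies in exactly one element of $\dS_0$, so I would account for all of them by the type of the subspace covering them. By Lemma~\ref{lem:S0_two_types}, $\dS_0$ contains no subspace of Type C, exactly two of Type A, and exactly $2^k-1$ of Type B. By the definition of Type A, each of the two Type A subspaces contains exactly one vector of $\V_x^{(2k,k)}$. By the structure in Lemma~\ref{lem:TypeB_structure}, a Type B subspace contains a vector with first coordinate $x$ precisely when $x$ belongs to its projection $X_i$, and in that case it contains exactly two such vectors, the pair $(x,y_{2i})$ and $(x,y_{2i+1})$; otherwise it contains none.

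Letting $m_x$ denote the number of the $X_i$ that contain $x$, the total count of vectors of $\V_x^{(2k,k)}$ gives $2 + 2m_x = 2^k$, whence $m_x = 2^{k-1}-1$. This is exactly the hypothesis of the counting lemma, so that lemma (with $\ell=k$) yields that $X_1,\ldots,X_{2^k-1}$ are distinct $(k-1)$-dimensional subspaces. In particular, the two projections $\{ {\bf 0},x_0,\ldots,x_{2^{k-1}-2} \}$ and $\{ {\bf 0},u_0,\ldots,u_{2^{k-1}-2} \}$ named in the statement, coming from two distinct Type B members of $\dS_0$, are not equal, which is the claim.

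I expect the only delicate point to be the bookkeeping in the covering count: making sure that the contribution of each type is exactly as claimed (one vector per Type A subspace, two per Type B subspace whose projection contains $x$, and that no Type C subspace occurs in $\dS_0$). Each of these facts is supplied by Lemma~\ref{lem:TypeB_structure}, the definition of Type A, and Lemma~\ref{lem:S0_two_types}, respectively, so once those are invoked the equation $2+2m_x=2^k$ follows immediately and the rest is a citation of the preceding lemma.
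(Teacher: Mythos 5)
Your proof is correct and is exactly the derivation the paper intends: the corollary is stated immediately after the unnumbered counting lemma with no further argument, and your covering count (each of the $2^k$ vectors of $\V_x^{(2k,k)}$ lies in exactly one spread element, contributing one per Type A subspace, two per Type B subspace whose projection contains $x$, and none from Type C, which is absent by Lemma~\ref{lem:S0_two_types}) is precisely the bookkeeping needed to verify the lemma's hypothesis with $\ell=k$. Nothing to add or correct.
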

\vspace{0.5cm}

Given the spread $\dS_0$, we define the spread $\dS_i$, $1 \leq i \leq 2^k-2$
as follows.
$$
\dS_i \deff \Bigl\{  \{ ( {\bf 0},{\bf 0}),(x_0,\alpha^i y_0),(x_1, \alpha^i y_1),\ldots,(x_{2^k-2}, \alpha^i y_{2^k-2}) \} ~:~
\{ ( {\bf 0},{\bf 0}),(x_0,y_0),(x_1,y_1),\ldots,(x_{2^k-2},y_{2^k-2}) \} \in \dS_0 \Bigr\}~.
$$
\begin{lemma}
For each $i$, $1 \leq i \leq 2^k-2$, $\dS_i$ is a spread.
\end{lemma}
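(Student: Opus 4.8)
The plan is to realize $\dS_i$ as the image of $\dS_0$ under a single invertible $\F_2$-linear map of $\F_2^{2k}$, and then invoke the elementary fact that a linear automorphism carries one spread to another. Define $\phi_i \colon \F_2^{2k} \to \F_2^{2k}$ by $\phi_i(x,y) = (x, \alpha^i y)$, where $x,y \in \F_2^k$ and the product $\alpha^i y$ is computed in $\F_{2^k}$ under the identification $\F_2^k \cong \F_{2^k}$ fixed earlier. Comparing with the definition of $\dS_i$, every element of $\dS_i$ is obtained by applying $\phi_i$ vector-by-vector to the vectors of some $Z \in \dS_0$; hence, once $\phi_i$ is known to be linear, $\dS_i = \{ \phi_i(Z) : Z \in \dS_0 \}$, where $\phi_i(Z)$ denotes the image subspace.

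First I would check that $\phi_i$ is an $\F_2$-linear automorphism of $\F_2^{2k}$. The second coordinate undergoes multiplication by the fixed field element $\alpha^i \in \F_{2^k}$; since $\F_{2^k}$ is an $\F_2$-vector space, this map is $\F_2$-linear by distributivity, and it is a bijection because $\alpha^i \neq {\bf 0}$. The first coordinate is left unchanged, which is linear. Thus $\phi_i$ is $\F_2$-linear, and being injective on a finite-dimensional space it is a bijection of $\F_2^{2k}$ fixing ${\bf 0}$. This is the same mechanism recorded earlier for the Singer-cycle action, applied here only to the last $k$ coordinates rather than to the whole vector.

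With $\phi_i$ available, the three conditions in the definition of a spread follow at once. Being an invertible linear map, $\phi_i$ sends each $k$-dimensional subspace $Z \in \dS_0$ to a $k$-dimensional subspace $\phi_i(Z)$, so every element of $\dS_i$ is a $k$-dimensional subspace. Since $\dS_0$ is a spread, each nonzero vector of $\F_2^{2k}$ lies in exactly one $Z \in \dS_0$; applying the bijection $\phi_i$ (which fixes ${\bf 0}$ and permutes the nonzero vectors), each nonzero vector lies in exactly one $\phi_i(Z) \in \dS_i$. Covering each nonzero vector exactly once already forces pairwise disjointness, so $\dS_i$ is a spread.

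I do not expect a genuine obstacle here: the entire content is the observation that the coordinatewise scaling $(x,y) \mapsto (x, \alpha^i y)$ is a linear automorphism of $\F_2^{2k}$ and that $\dS_i$ is precisely its image of $\dS_0$. The only point warranting a line of justification is the $\F_2$-linearity of multiplication by $\alpha^i$ inside $\F_{2^k}$, which is the standard fact that field multiplication by a fixed element respects the underlying $\F_2$-vector-space structure.
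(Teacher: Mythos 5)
Your proposal is correct and takes essentially the same route as the paper: the paper's proof rests on exactly the two observations you derive, namely that $(x,y) \mapsto (x, \alpha^i y)$ carries $k$-dimensional subspaces to $k$-dimensional subspaces and permutes the nonzero vectors of $\F_2^{2k}$. You merely package these two facts into the single cleaner statement that $\phi_i$ is an $\F_2$-linear automorphism fixing ${\bf 0}$ (via the Singer-cycle-type multiplication fact the paper records earlier), which is a tidier presentation of the identical argument.
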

\begin{proof}
Follows immediately from the following two simple observations. The first one is that if
$\{ ( {\bf 0},{\bf 0}),(x_0,y_0),(x_1,y_1),\ldots,(x_{2^k-2},y_{2^k-2}) \}$
is a $k$-dimensional subspace then also the set
$\{ ( {\bf 0},{\bf 0}),(x_0,\alpha^i y_0),(x_1, \alpha^i y_1),\ldots,(x_{2^k-2}, \alpha^i y_{2^k-2}) \}$
is a $k$-dimensional subspace. The second one is that if
the set $\cF \deff \{ (u_j , v_j ) ~:~ u_j,v_j \in \F_2^k, ~ (u_j ,v_j ) \neq ( {\bf 0},{\bf 0}),~ 0 \leq j \leq 2^{2k}-2  \}$
contains all the $2^{2k}-1$ nonzero elements of $\F_2^k \times \F_2^k$ then the set
$\{ (u_j , \alpha^i v_j ) ~:~  (u_j ,v_j ) \in \cF,~ 0 \leq j \leq 2^{2k}-2  \}$
also contains all the $2^{2k}-1$ nonzero elements of $\F_2^k \times \F_2^k$.
\end{proof}

It is easily verified that
\begin{lemma}
\label{lem:same_type}
For each $0 \leq i \leq 2^k-2$,
if the $k$-dimensional subspace
$$\{ ( {\bf 0},{\bf 0}),(x_0,y_0),(x_1, y_1),\ldots,(x_{2^k-2}, y_{2^k-2}) \}$$
is of Type A (Type B, respectively) then the $k$-dimensional subspace
$$\{ ( {\bf 0},{\bf 0}),(x_0,\alpha^i y_0),(x_1, \alpha^i y_1),\ldots,(x_{2^k-2}, \alpha^i y_{2^k-2}) \}$$
is also of Type A (Type B, respectively).
\end{lemma}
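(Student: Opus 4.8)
The plan is to exploit the fact that the transformation underlying $\dS_i$, namely the map $\phi_i \colon (u,v) \mapsto (u,\alpha^i v)$ for $u,v \in \F_2^k$, leaves the first $k$-coordinate block of every vector untouched while acting overall as an invertible $\F_2$-linear map. Since membership of a nonzero vector in a class $\V_x^{(2k,k)}$ is determined solely by this first block, $\phi_i$ should carry each $\V_x^{(2k,k)}$ bijectively onto itself, and hence preserve, class by class, the number of vectors of a given subspace lying in each $\V_x^{(2k,k)}$. The Type~A and Type~B conditions are stated purely in terms of these intersection numbers, so invariance of the numbers will give invariance of the type.

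First I would record that the image is again a $k$-dimensional subspace; this is exactly the first observation used in the proof of the preceding lemma, and it also follows from the Singer cycle property of Section~\ref{sec:conf}, since multiplying the second block by the nonzero element $\alpha^i$ is an invertible $\F_2$-linear transformation. Next I would establish the incidence-preserving statement: for every $x \in \F_2^k$ and every nonzero $w=(u,v)$ one has $w \in \V_x^{(2k,k)}$ if and only if $\phi_i(w) \in \V_x^{(2k,k)}$. For $x \neq {\bf 0}$ this is immediate because $\phi_i$ fixes $u$ and cannot send a nonzero vector to ${\bf 0}$; for $x = {\bf 0}$ it reduces to the remark that $v \neq {\bf 0}$ precisely when $\alpha^i v \neq {\bf 0}$, which holds because $\alpha^i$ is a unit.

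With this in hand the two cases follow by reading off the definitions. If the original subspace is of Type~A, it meets each $\V_x^{(2k,k)}$ with $x \neq {\bf 0}$ in exactly one vector and meets $\V_{\bf 0}^{(2k,k)}$ in none; since $\phi_i$ permutes each $\V_x^{(2k,k)}$ bijectively, the image inherits precisely these intersection numbers and is again of Type~A. The Type~B case is identical, the single relevant count being the preserved number of vectors in $\V_{\bf 0}^{(2k,k)}$, which equals $1$ both before and after applying $\phi_i$.

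I do not expect a genuine obstacle here: the whole content is the bookkeeping remark that $\phi_i$ respects the decomposition $\F_2^{2k}=\{{\bf 0}\}\cup\bigcup_{x\in\F_2^k}\V_x^{(2k,k)}$. The only point that requires a moment's care is the $x={\bf 0}$ case, where one must use that multiplication by $\alpha^i$ neither creates nor destroys a zero second block; this is precisely why the statement is confined to $0 \le i \le 2^k-2$, so that $\alpha^i$ ranges only over nonzero field elements.
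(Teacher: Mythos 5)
Your proof is correct, and it matches the paper's intent exactly: the paper states this lemma without proof (prefacing it only with ``It is easily verified that''), and your argument---that $(u,v)\mapsto(u,\alpha^i v)$ is an invertible $\F_2$-linear map fixing the first coordinate block, hence carrying each $\V_x^{(2k,k)}$ bijectively onto itself and preserving the intersection numbers that define Types A and B---is precisely the straightforward verification the author had in mind. Nothing further is needed.
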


\begin{lemma}
For each $i_1$, $i_2$, such that $0 \leq i_1 < i_2 \leq 2^k-2$, the spreads
$\dS_{i_1}$ and $\dS_{i_2}$ are disjoint.
\end{lemma}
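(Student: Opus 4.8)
The plan is to reduce the claim to the single statement that $\dS_0$ and $\dS_j$ are disjoint for every $j$ with $1 \le j \le 2^k-2$, and then to dispose of the two possible types of a common subspace separately. For the reduction, let $\phi_j$ denote the $\F_2$-linear bijection of $\F_2^{2k}=\F_{2^k}\times\F_{2^k}$ given by $(x,y)\mapsto(x,\alpha^j y)$; since multiplication by $\alpha^j$ permutes the nonzero elements of $\F_{2^k}$, the map $\phi_j$ carries $k$-dimensional subspaces to $k$-dimensional subspaces, and by definition $\dS_i=\phi_i(\dS_0)$ for all $i$. As $\phi_{i_1}$ is a bijection and $\phi_{i_2}=\phi_{i_2-i_1}\circ\phi_{i_1}$, applying $\phi_{i_1}^{-1}$ yields $\phi_{i_1}^{-1}(\dS_{i_1}\cap\dS_{i_2})=\dS_0\cap\dS_{i_2-i_1}$ with $1\le i_2-i_1\le 2^k-2$; hence it suffices to prove $\dS_0\cap\dS_j=\emptyset$ for $1\le j\le 2^k-2$. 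Suppose for contradiction that $W\in\dS_0\cap\dS_j$. Then $W\in\dS_0$ and $W=\phi_j(U)$ for some $U\in\dS_0$, and by Lemma~\ref{lem:same_type} the subspaces $W$ and $U$ have the same type (recall $\dS_0$ contains only subspaces of Types A and B by Lemma~\ref{lem:S0_two_types}).

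First I would treat the Type B case, which is quick. Since $\phi_j$ fixes the first coordinate of every vector, $W$ and $U$ have exactly the same $(k-1)$-dimensional subspace $\{{\bf 0},x_0,\ldots,x_{2^{k-1}-2}\}$ in their first coordinates, in the notation of Lemma~\ref{lem:TypeB_structure}. By Corollary~\ref{cor:S0_typeB}, two Type B subspaces of $\dS_0$ with equal first-coordinate subspaces must coincide, so $W=U$ and therefore $W=\phi_j(W)$. Comparing the unique vector $({\bf 0},z)$ with $z\neq{\bf 0}$ of $W$ against its image $({\bf 0},\alpha^j z)$ forces $\alpha^j z=z$, whence $\alpha^j=1$, contradicting $1\le j\le 2^k-2$.

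The Type A case is the main obstacle and requires the explicit structure of $\dS_0$. Here a Type A subspace projects bijectively onto its first coordinate, so it is the graph $\{(x,g(x)):x\in\F_{2^k}\}$ of an $\F_2$-linear map $g$, which I would record by its linearized polynomial $g(x)=\sum_{t=0}^{k-1}a_t x^{2^t}$; then $\phi_j$ acts by $g\mapsto\alpha^j g$, i.e. it scales the coefficient vector $(a_0,\ldots,a_{k-1})$ by $\alpha^j$ and thus preserves its support, the set of indices $t$ with $a_t\neq{\bf 0}$. The key step is to read off the two Type A subspaces of $\dS_0$ from the construction by tracing back through $\overleftrightarrow{\C}$, whose two Type A members are identified in Lemma~\ref{lem:C_two_types}. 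In Case~1 the two Type A subspaces of $\dS_0$ are the graphs of $x\mapsto x$ and of $x\mapsto h(x)+x$, where $h$ is not a multiplication map by the Case~1 hypothesis, so these have supports $\{0\}$ and one containing an index $\ge 1$, respectively. In Case~2 they are the graphs of $x\mapsto x^2$ and of $x\mapsto cx+x^2$ for some $c\neq{\bf 0}$, with supports $\{1\}$ and $\{0,1\}$. In either case the two supports differ, so no nontrivial scaling by $\alpha^j$ with $1\le j\le 2^k-2$ can send one to the other; and a self-relation $g=\alpha^j g$ would force $g=0$, excluded by Lemma~\ref{lem:S0_typeA}. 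This contradicts $W=\phi_j(U)$ and finishes the Type A case.

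I expect the only genuinely delicate point to be the bookkeeping that identifies these two Type A subspaces of $\dS_0$ in each of the two construction cases and verifies that their linearized-polynomial supports differ; the role of Lemma~\ref{lem:S0_typeA} is precisely to guarantee that neither is the zero map and that at most one is a multiplication map, which is what separates their supports. Once that is in hand, the support-preservation of the scaling $\phi_j$, together with the first-coordinate argument for Type B, makes the remainder routine, and combining the two cases with the reduction gives $\dS_{i_1}\cap\dS_{i_2}=\emptyset$.
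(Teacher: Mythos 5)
Your proof is correct and takes essentially the same route as the paper: the identical Type A/Type B case split, with Corollary~\ref{cor:S0_typeB} plus uniqueness of the $({\bf 0},z)$ vector disposing of Type B, Lemma~\ref{lem:S0_typeA} disposing of the self-image subcase, and the explicit identification of the two Type A subspaces of $\dS_0$ in each of the two construction cases disposing of the cross subcase. Your linearized-polynomial support invariant is a cleaner repackaging of the paper's constant-ratio argument (there, $W=\phi_{i_2-i_1}(U)$ forces $y_\ell/v_\ell=\alpha^{i_2-i_1}$ for all $\ell$, which is exactly the scalar-multiple condition you phrase via supports), and your initial reduction from the pair $(i_1,i_2)$ to the pair $(0,j)$ is a cosmetic difference.
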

\begin{proof}
By the definition of Type A and Type B, and by the definition of $\dS_j$,
we have that for each $j$, $1 \leq j \leq 2^k-2$,
the number of subspaces of Type A (Type B, respectively) in $\dS_0$ is equal
to the number of subspaces of Type A (Type B, respectively) in $\dS_j$.
Therefore, by Lemma~\ref{lem:S0_two_types}, in $\dS_j$, $1 \leq j \leq 2^k-2$,
there are exactly $2^k-1$ subspaces of Type B and exactly two subspaces of Type~A.
We distinguish now between the two types of subspaces.

\noindent
{\bf Case 1:} Subspaces of Type B.

By Corollary~\ref{cor:S0_typeB}, if
$\{ {\bf 0},x_0,x_1,\ldots,x_{2^{k-1}-2} \}$ is a $(k-1)$-dimensional subspaces
of $\F_2^k$ then there is at most one subspace of the form
$$\{ ( {\bf 0},{\bf 0}),( {\bf 0},z),(x_0,y_0),(x_0,y_1),(x_1,y_2),(x_1,y_3),\ldots,(x_{2^{k-1}-2},y_{2^k-4}),(x_{2^{k-1}-2},y_{2^k-3}) \},$$
in $\dS_0$. By the construction of spread $\dS_j$, $ 1 \leq j \leq 2^k-2$,
from $\dS_0$, we have that the spreads $\dS_{i_1}$ and $\dS_{i_2}$
can have a subspace of Type B in common if for
such a $(k-1)$-dimensional subspace $\{ {\bf 0},x_0,x_1,\ldots,x_{2^{k-1}-2} \}$,
the two $k$-dimensional subspaces
$$\{ ( {\bf 0},{\bf 0}),( {\bf 0},\alpha^{i_1} z),(x_0,\alpha^{i_1} y_0),(x_0, \alpha^{i_1} y_1),(x_1, \alpha^{i_1} y_2),(x_1 , \alpha^{i_1} y_3),\ldots,(x_{2^{k-1}-2},\alpha^{i_1} y_{2^k-4}),(x_{2^{k-1}-2}, \alpha^{i_1} y_{2^k-3}) \}$$
and
$$\{ ( {\bf 0},{\bf 0}),( {\bf 0},\alpha^{i_2} z),(x_0,\alpha^{i_2} y_0),(x_0, \alpha^{i_2} y_1),(x_1, \alpha^{i_2} y_2),(x_1 , \alpha^{i_2} y_3),\ldots,(x_{2^{k-1}-2},\alpha^{i_2} y_{2^k-4}),(x_{2^{k-1}-2}, \alpha^{i_2} y_{2^k-3}) \}$$
are equal. This is clearly impossible since $\alpha^{i_1} z \neq \alpha^{i_2} z$.
Hence, $\dS_{i_1}$ and $\dS_{i_2}$ have distinct subspaces of Type B.

\noindent
{\bf Case 2:} Subspaces of Type A.

A $k$-dimensional subspace of Type A in $\dS_0$ has the form
$$\{ ( {\bf 0},{\bf 0}),(x_0,y_0),(x_1, y_1),\ldots,(x_{2^k-2}, y_{2^k-2}) \}~,$$
where all the $x_j$'s are different.
If $y_r \neq {\bf 0}$ for some $r$, then
$$\{ ( {\bf 0},{\bf 0}),(x_0,\alpha^{i_1} y_0),(x_1,\alpha^{i_1} y_1),\ldots,(x_{2^k-2},\alpha^{i_1} y_{2^k-2}) \}
\neq \{ ( {\bf 0},{\bf 0}),(x_0,\alpha^{i_2} y_0),(x_1,\alpha^{i_2} y_1),\ldots,(x_{2^k-2},\alpha^{i_2} y_{2^k-2})~.$$
By Lemma~\ref{lem:S0_typeA}, not all the $y_j$'s are \emph{zeroes} and
at most one of the subspaces of Type A in $\dS_0$ has the form
$\{ ( {\bf 0},{\bf 0}),(\alpha^0, \alpha^j ),(\alpha^1, \alpha^{j+1} ),\ldots,(\alpha^{2^k-2}, \alpha^{j+2^k-2}) \}$,
for some $j$, $0 \leq j \leq 2^k-2$.
Let
$$\{ ( {\bf 0},{\bf 0}),(x_0, y_0),(x_1,  y_1),\ldots,(x_{2^k-2},  y_{2^k-2}) \}$$
and
$$\{ ( {\bf 0},{\bf 0}),(x_0, v_0),(x_1, v_1),\ldots,(x_{2^k-2}, v_{2^k-2}) \}$$
be the two subspaces of Type A in $\dS_0$.
Assume a $k$-dimensional subspace of Type A
in $\dS_{i_1}$ is equal a $k$-dimensional subspace of Type A in $\dS_{i_2}$. Then
$$\alpha^{i_1} y_\ell = \alpha^{i_2} v_\ell$$
for each $\ell$, $1 \leq \ell \leq 2^k-2$. It implies that for
each $\ell$, $1 \leq \ell \leq 2^k-2$, $\frac{y_\ell}{v_\ell} = \alpha^{i_2-i_1}$, and hence
for all $\ell_1$, $\ell_2$, $0 \leq \ell_1 < \ell_2 \leq 2^k-1$ we have
$\frac{y_{\ell_2}}{v_{\ell_2}} = \frac{y_{\ell_1}}{v_{\ell_1}}$.
We distinguish now between two subcases.

\noindent
{\bf Case 2.1:} Assume that there is no subspace
of the form
$\{ ( {\bf 0},{\bf 0}),(\alpha^0, \alpha^j ),(\alpha^j, \alpha^{j+1} ),\ldots,(\alpha^{2^k-2}, \alpha^{j+2^k-2}) \} ,$
for any $j$, $0 \leq j \leq 2^k-2$, in $\overleftrightarrow{\C}$.
Hence, $\dS_0$ contains the $k$-dimensional subspace
$$\{ ( {\bf 0},{\bf 0}),(\alpha^0, \alpha^0 ),(\alpha^1, \alpha^1 ),\ldots,(\alpha^{2^k-2}, \alpha^{2^k-2}) \}$$
and the second subspace of type A does not has the form
$$\{ ( {\bf 0},{\bf 0}),(\alpha^0, \alpha^j ),(\alpha^1, \alpha^{j+1} ),\ldots,(\alpha^{2^k-2}, \alpha^{j+2^k-2}) \} ,$$
for any $j$, $1 \leq j \leq 2^k-2$.
It implies that there exist $\ell_1$, $\ell_2$,
$0 \leq \ell_1 < \ell_2 \leq 2^k-1$ such that $\frac{y_{\ell_2}}{v_{\ell_2}} \neq \frac{y_{\ell_1}}{v_{\ell_1}}$, a contradiction.

\noindent
{\bf Case 2.2:} Assume that for some $j$, $0 \leq j \leq 2^k-2$, there exists a subspace
of the form
$\{ ( {\bf 0},{\bf 0}),(\alpha^0, \alpha^j ),(\alpha^1, \alpha^{j+1} ),\ldots,(\alpha^{2^k-2}, \alpha^{j+2^k-2}) \} ,$
in $\overleftrightarrow{\C}$.
Hence, the two  subspaces of Type A in $\dS_0$ have the form
$\{ ( {\bf 0},{\bf 0}),(\alpha^0,\alpha^0),(\alpha^1,\alpha^2),
(\alpha^2 ,\alpha^4 ),\ldots,(\alpha^{2^k-2},\alpha^{2^k-3}) \}$
and
$\{ ( {\bf 0},{\bf 0}),(\alpha^0,\alpha^j +\alpha^0),(\alpha^1,\alpha^{j+1} + \alpha^2),
(\alpha^2 ,\alpha^{j+2} +\alpha^4 ),\ldots,(\alpha^{2^k-2},\alpha^{j+2^k-2} +\alpha^{2^k-3}) \}$.
W.l.o.g. we can assume that $x_0=\alpha^0$ and $x_1=\alpha^1$.
It implies that $\frac{y_0}{v_0}=\alpha^j + \alpha^0 \neq \alpha^j + \alpha^1 = \frac{y_1}{v_1}$, a contradiction.

Thus, $\dS_{i_1}$ and $\dS_{i_2}$ are disjoint spreads in $\cG_2(2k,k)$.
\end{proof}

\begin{cor}
There exists a set of $2^k-1$ pairwise disjoint spreads in $\cG_2(2k,k)$.
\end{cor}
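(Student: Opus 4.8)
The plan is to recognize that this corollary is the harvest step of the preceding construction: the family $\dS_0,\dS_1,\ldots,\dS_{2^k-2}$ has already been defined, and nothing further needs to be built. First I would record the count, namely that the index $i$ runs over $\{0,1,\ldots,2^k-2\}$, a set of size $2^k-1$, so that the family contains exactly $2^k-1$ members; I would state this explicitly so the reader need not recount.

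Next I would verify the two required properties by quoting the lemmas above. That each $\dS_i$ is a spread in $\cG_2(2k,k)$ follows from the lemma asserting that $\dS_0$ is a spread together with the lemma asserting that $\dS_i$ is a spread for every $i$ with $1\le i\le 2^k-2$. Pairwise disjointness is precisely the statement of the preceding lemma, which shows that $\dS_{i_1}$ and $\dS_{i_2}$ have no common $k$-dimensional subspace whenever $0\le i_1<i_2\le 2^k-2$. Since a spread is nonempty, disjoint spreads are automatically distinct, so the $2^k-1$ spreads really are $2^k-1$ different objects; assembling these observations gives the corollary.

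The hard part is already behind us: all the genuine difficulty lay in constructing $\dS_0$ from the spread $\overleftrightarrow{\C}$ derived from the lifted MRD code, and in the Type A / Type B bookkeeping driving the disjointness lemma, both of which are discharged above. Hence I expect no real obstacle here, only the bookkeeping of the index range. I would also note, for context, that under the Grassmannian--projective correspondence this family translates into $2^k-1$ pairwise disjoint $(k-1)$-spreads in PG$(2k-1,2)$, which is exactly the $q=2$ base case that the recursive construction of the later section will build upon.
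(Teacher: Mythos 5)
Your proposal is correct and matches the paper exactly: the corollary is stated there without proof, as the immediate harvest of the preceding lemmas (that $\dS_0$ is a spread, that each $\dS_i$, $1\le i\le 2^k-2$, is a spread, and that $\dS_{i_1}$, $\dS_{i_2}$ are disjoint for $i_1<i_2$), which is precisely the assembly you carry out. Your closing remark on the projective translation is also consistent with the paper's next corollary, which restates the result as $2^{k+1}-1$ pairwise disjoint $k$-spreads in PG($2k+1,2$) after the index shift $k\mapsto k+1$.
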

\begin{cor}
There exists a set of $2^{k+1}-1$ pairwise disjoint $k$-spreads
in $PG(2k+1,2)$.
\end{cor}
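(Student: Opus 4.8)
The plan is to obtain this statement directly from the preceding corollary by a shift of parameters together with the dictionary between Grassmannian spreads and projective spreads recorded in Section~\ref{sec:conf}. Recall from there that a spread in $\cG_2(n,m)$ is exactly an $(m-1)$-spread in PG($n-1,2$), and, more to the point, that a set of $M$ pairwise disjoint spreads in $\cG_2(n,m)$ translates into a set of $M$ pairwise disjoint $(m-1)$-spreads in PG($n-1,2$). Thus every statement about packings of Grassmannian spreads yields a corresponding statement about packings of projective spreads, with the dimensional parameters related by $n \mapsto n-1$ and $m \mapsto m-1$.

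First I would apply the preceding corollary with $k$ replaced by $k+1$. The construction of Section~\ref{sec:n=2k} was carried out for an arbitrary value of its size parameter, so substituting $k+1$ for $k$ is legitimate and produces $2^{\,(k+1)}-1 = 2^{k+1}-1$ pairwise disjoint spreads in $\cG_2\bigl(2(k+1),k+1\bigr) = \cG_2(2k+2,k+1)$. Second, I would translate through the dictionary above: a spread in $\cG_2(2k+2,k+1)$ is a $\bigl((k+1)-1\bigr)$-spread, that is, a $k$-spread, in PG($(2k+2)-1,2$) $=$ PG($2k+1,2$), and disjointness of the Grassmannian spreads is inherited by the corresponding projective spreads. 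Consequently the $2^{k+1}-1$ pairwise disjoint spreads produced above become $2^{k+1}-1$ pairwise disjoint $k$-spreads in PG($2k+1,2$), as claimed.

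I do not expect any genuine obstacle here: the entire content of the section has already been established, and what remains is purely the bookkeeping of the index shift. The only point worth confirming is that the parameters are consistent, namely that a $k$-spread in PG($2k+1,2$) can exist at all; this holds because $k+1$ divides $(2k+1)+1 = 2(k+1)$, which is exactly the divisibility condition $k+1 \mid n+1$ for $n = 2k+1$, matching the requirement $m \mid n$ for the existence of a spread in $\cG_2(2k+2,k+1)$.
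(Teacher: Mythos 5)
Your proposal is correct and matches the paper's (implicit) derivation exactly: the paper obtains this corollary from the preceding one by replacing $k$ with $k+1$, using the correspondence stated in Section~\ref{sec:conf} that a spread in $\cG_2(2k+2,k+1)$ is precisely a $k$-spread in PG($2k+1,2$) and that disjointness carries over. Your additional check of the divisibility condition is harmless but not needed, since the existence of the spreads is already guaranteed by the construction itself.
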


\section{A construction for $q>2$ and $n=2k$}
\label{sec:any_q}

In this section we will describe a construction of two
disjoint spreads in $\cG_q(2k,k)$ for any $q > 2$.
The idea behind the construction will be similar to the one for $q=2$.
But, since we construct only two disjoint spreads, the analysis will
be much simpler. We will start by modifying and generalizing the definition of
the case where $q=2$ for $q \geq 2$.

For a given $X \in \cG_q(k,1)$, let $\V_X^{(n,k)}$ denote the set nonzero
vectors in $\F_q^n$ whose first $k$ entries form any given nonzero vector of $X$.
Let $\V_{\bf 0}^{(n,k)}$ denote a maximal set of $\frac{q^{n-k}-1}{q-1}$ nonzero
vectors in $\F_q^n$ whose first $k$ entries are \emph{zeroes}, for which
any two vectors in the set are linearly independent.
Let $\V_{\bf 0}$ denote the $k$-dimensional subspace spanned by $\V_{\bf 0}^{(n,k)}$.

We consider $k$-dimensional subspaces of three types:
\begin{enumerate}
\item A $k$-dimensional subspace $Y \in \cG_q(2k,k)$ is of Type A if
for each $X \in \cG_q(k,1)$, $Y$ contains
exactly one vector from $\V_X^{(2k,k)}$, and $Y$ does not contain
any vector from $\V_0^{(2k,k)}$.

\item A $k$-dimensional subspace $Y \in \cG_q(2k,k)$ is of Type B if
$Y$ contains exactly one vector from $\V_0^{(2k,k)}$.

\item A $k$-dimensional subspace $Y \in \cG_q(2k,k)$ is of Type C if all the vectors
of $Y$ are contained in $\V_0^{(2k,k)}$.
\end{enumerate}

Throughout this section let $\ell = \frac{q^k-1}{q-1} -1$.
Let $\C_0$ be an $(2k,q^{2k},2(k-1),k)_q$ $\CMRD$.
$\C_0$ is constructed from a linear rank-metric code $\cC$ and therefore
the $k$-dimensional subspace
$\Span{\{ ({\bf 0},{\bf 0}),(x_0,0),(x_1,0),\ldots,(x_{\ell},0) \}}$
is a codeword of $\C_0$. Since the minimum subspace distance of $\C_0$ is $2(k-1)$,
it follows that if
$\Span{\{ ( {\bf 0},{\bf 0}),(x_0,y_0),(x_1,y_1),\ldots,(x_{\ell},y_{\ell}) \}}$ is
another codeword of $\C_0$, then
at most one of $y_i$'s is an all-zero vector. Since  $| \cG_2(k,1)|=\frac{q^k-1}{q-1}$ it
follows from Lemma~\ref{lem:resolv} and Corollary~\ref{cor:MRD_parts} that

\begin{lemma}
The code $\C_0$ has an $(n,q^k,2k,k)_q$ subcode $\C'_0$ which
contains exactly $\frac{q^k-1}{q-1}$ codewords of the form
$$\Span{\{ ( {\bf 0},{\bf 0}),(x_0,y_0),(x_1,y_1),\ldots,(x_{\ell},y_{\ell}) \}},$$
in which exactly one of the $y_i$'s is the all-zero vector.
\end{lemma}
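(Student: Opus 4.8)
The plan is to realize the asserted subcode $\C'_0$ as one of the parallel classes furnished by Corollary~\ref{cor:MRD_parts}, and to pin down the codewords with a single zero $y_i$ by a rank count combined with a covering (incidence) count.

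First I would record the shape of a codeword. Each codeword of $\C_0$ is the lifting of a matrix $A$ of the underlying $[k\times k,2k,k-1]_q$ MRD code, i.e.\ the subspace $\{(v,vA):v\in\F_q^k\}$; writing its point with first block $x_i=v$ as $(x_i,y_i)$ we have $y_i=vA$. Hence the number of indices $i$ with $y_i={\bf 0}$ is the number of projective points $[v]$ with $vA={\bf 0}$, that is $\frac{q^{\,k-\text{rank}(A)}-1}{q-1}$. Because the MRD code has minimum rank distance $k-1$, every nonzero $A$ satisfies $\text{rank}(A)\in\{k-1,k\}$; this is exactly the observation, already made just before the lemma, that every codeword other than $\Span{\{({\bf 0},{\bf 0}),(x_0,0),\ldots,(x_\ell,0)\}}$ has at most one $y_i={\bf 0}$. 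Consequently a codeword has \emph{exactly} one $y_i={\bf 0}$ precisely when $\text{rank}(A)=k-1$, and in that case it contains exactly $q-1$ vectors of the form $(x,{\bf 0})$ with $x\neq{\bf 0}$ (the nonzero vectors of its $1$-dimensional left kernel); a codeword with $\text{rank}(A)=k$ contains none of these, while the zero codeword contains all $q^k-1$ of them.

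Next I would bring in the partition. By Corollary~\ref{cor:MRD_parts}, $\C_0$ splits into $q^k$ subcodes, each an $(n,q^k,2k,k)_q$ code, and by Lemma~\ref{lem:resolv} every such parallel class covers each vector of $\V^{(2k,k)}$ exactly once. I would apply this to the $q^k-1$ vectors $(x,{\bf 0})$, $x\neq{\bf 0}$, all of which lie in $\V^{(2k,k)}$: inside a fixed class these vectors are partitioned among its codewords, each rank-$(k-1)$ codeword accounting for $q-1$ of them and each rank-$k$ codeword for none.

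Finally I would locate the right class. The zero codeword lies in exactly one class $\cP_0$, in which it alone absorbs all $q^k-1$ of the vectors $(x,{\bf 0})$, forcing every remaining codeword of $\cP_0$ to have rank $k$. In any other class $\cP$ the $q^k-1$ vectors are instead distributed in blocks of $q-1$ among its rank-$(k-1)$ codewords, so $\cP$ contains exactly $\frac{q^k-1}{q-1}$ codewords with a single zero $y_i$; taking $\C'_0=\cP$ (which exists since $q^k>1$) proves the lemma. The only points demanding care are the bookkeeping between vectors and projective points---the factor $q-1$---and isolating the class $\cP_0$ that holds the zero codeword; neither is a real obstacle, and the resulting count $\frac{q^k-1}{q-1}=\ell+1=|\cG_q(k,1)|$ is exactly the number asserted.
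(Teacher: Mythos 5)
Your proof is correct and follows essentially the same route as the paper: the paper's (terse) justification is exactly the combination of the lifted zero codeword, the minimum-distance bound forcing at most one zero $y_i$, and the partition into parallel classes from Lemma~\ref{lem:resolv} and Corollary~\ref{cor:MRD_parts}, with the class avoiding the zero codeword serving as $\C'_0$. Your rank-of-$A$ bookkeeping and the count of the $q^k-1$ vectors $(x,{\bf 0})$ in blocks of $q-1$ simply make explicit the covering argument the paper leaves implicit.
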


\begin{cor}
\label{cor:Cstructq}
There exists an $(2k,q^k+1,2k,k)_q$ code which contains $\V_0$
as a codeword and for each codeword
$\Span{\{ ( {\bf 0},{\bf 0}),(x_0,y_0),(x_1,y_1),\ldots,(x_{2^k-2},y_{2^k-2}) \}}$
at most one of the $y_i$'s is the all-zero vector.
\end{cor}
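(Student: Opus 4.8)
The plan is to produce the required code by adjoining the single subspace $\V_0$ to the subcode $\C'_0$ supplied by the preceding lemma, and then to check that this one addition costs nothing. Recall that $\C'_0$ is an $(2k,q^k,2k,k)_q$ code, i.e.\ a family of $q^k$ pairwise disjoint $k$-dimensional subspaces lying inside the lifted MRD code $\C_0$. The structural fact I would isolate first is that no codeword of $\C_0$ contains a nonzero vector with all-zero first $k$ entries: every codeword of $\C_0$ is the row space of a matrix $[I_k ~ A]$, hence has the form $\{(x,xA) : x \in \F_q^k\}$, so its only vector with vanishing first part is $({\bf 0},{\bf 0})$. In particular no codeword of $\C'_0$ meets the nonzero part of $\V_0$.

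Next I would verify that $\C'_0 \cup \{\V_0\}$ is an $(2k,q^k+1,2k,k)_q$ code. It has $q^k+1$ codewords, and $\V_0$ is a $k$-dimensional subspace by its definition as the span of $\V_0^{(2k,k)}$. The codewords of $\C'_0$ are pairwise disjoint already, so the only incidences left to exclude are those between $\V_0$ and a codeword $C \in \C'_0$. But every nonzero vector of $\V_0$ has zero first part, whereas every nonzero vector of $C$ has nonzero first part by the structural fact above; hence $\V_0 \cap C = \{{\bf 0}\}$, the subspace distance is $2k$, and the minimum distance is unchanged. (A dimension count even shows that $\C'_0$ covers exactly the $q^k(q^k-1)$ vectors of $\V^{(2k,k)}$, so $\V_0$ fills in precisely the remaining nonzero vectors and the enlarged code is in fact a spread, though only disjointness is needed here.)

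Finally I would confirm the ``at most one all-zero $y_i$'' condition for each codeword. For a codeword $C$ inherited from $\C'_0$ this is the property already established before the lemma: with $R=\Span{\{({\bf 0},{\bf 0}),(x_0,0),\ldots,(x_\ell,0)\}}$ the all-zero-second-part codeword of $\C_0$, the minimum distance $2(k-1)$ of $\C_0$ forces $\dim(C \cap R)\leq 1$, and two distinct projective points of $C$ with all-zero second part would give $\dim(C \cap R)\geq 2$; so at most one $y_i$ vanishes. For the new codeword $\V_0$ each projective point is $({\bf 0},y_i)$ with $y_i\neq{\bf 0}$, so none of its $y_i$'s is all-zero and the condition holds trivially.

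Since the corollary merely repackages the preceding lemma, I do not expect a genuine obstacle; the one point deserving care is the disjointness of $\V_0$ from every codeword of $\C'_0$, which rests entirely on the graph structure $\{(x,xA) : x \in \F_q^k\}$ of lifted codewords and is exactly what guarantees that the extra codeword does not drop the minimum distance below $2k$.
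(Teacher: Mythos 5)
Your proposal is correct and follows essentially the same route as the paper: the corollary is obtained by adjoining $\V_0$ to the subcode $\C'_0$ of the preceding lemma, with disjointness guaranteed because every lifted codeword $\{(x,xA) : x \in \F_q^k\}$ meets $\V_0$ only in the zero vector, and the at-most-one-zero-$y_i$ property inherited from the minimum-distance argument against the codeword $\Span{\{({\bf 0},{\bf 0}),(x_0,0),\ldots,(x_\ell,0)\}}$. The paper leaves these verifications implicit; you have simply written them out, including the (correct but unneeded) observation that the enlarged code is in fact a spread.
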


Let $\C$ be an $(2k,q^k+1,2k,k)_q$ code as described in Corollary~\ref{cor:Cstructq}.
Let $\overleftrightarrow{\C}$ be the $(2k,q^k+1,2k,k)_q$ code
obtained from $\C$ as follows

$$
\overleftrightarrow{\C} \deff \Bigl\{ \Span{\{ ( {\bf 0},{\bf 0}),(x_0,y_0),(x_1,y_1),\ldots,(x_{\ell},y_{\ell}) \}} ~:~
\Span{\{ ( {\bf 0},{\bf 0}),(y_0,x_0),(y_1,x_1),\ldots,(y_{\ell},x_{\ell}) \}} \in \C  \Bigr\} .
$$

As a consequence of Corollary~\ref{cor:Cstructq} we have
\begin{lemma}
\label{lem:C_two_types_q}
The code $\overleftrightarrow{\C}$ is a spread which consists
of exactly $\frac{q^k-1}{q-1}$ subspaces of Type B and exactly
$q^k +1 - \frac{q^k-1}{q-1}$ subspaces of Type A.
\end{lemma}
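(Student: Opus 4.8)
The plan is to observe that $\overleftrightarrow{\C}$ is nothing but the image of $\C$ under the linear map $\sigma:\F_q^{2k}\to\F_q^{2k}$ that swaps the two coordinate blocks, $\sigma(u,v)=(v,u)$, and that the \emph{type} of a $k$-subspace $Y$ is completely determined by $\dim(Y\cap\V_{\bf 0})$, where $\V_{\bf 0}=\{({\bf 0},v):v\in\F_q^k\}$. First I would record that $\C$ is itself a spread: it is a $(2k,q^k+1,2k,k)_q$ code, so distance $2k$ forces $\dim(X\cap Y)=0$ for distinct codewords, and the number $q^k+1$ of pairwise disjoint $k$-subspaces equals the spread size $\tfrac{q^{2k}-1}{q^k-1}$; hence the codewords partition the nonzero vectors of $\F_q^{2k}$. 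Since $\sigma$ is a linear automorphism and $\overleftrightarrow{\C}=\{\sigma(W):W\in\C\}$ by the defining substitution, $\overleftrightarrow{\C}$ is again a partition of the nonzero vectors into $k$-subspaces, i.e.\ a spread. This is the direct analogue of Lemma~\ref{lem:C_two_types} and settles the first assertion.

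Next I would translate the three type conditions into a single invariant. A $k$-subspace $Y$ is of Type A, B, or C exactly when $\dim(Y\cap\V_{\bf 0})$ equals $0$, $1$, or $k$: the condition $Y\cap\V_{\bf 0}=\{{\bf 0}\}$ says $Y$ projects isomorphically onto the first block, so that $Y$ meets each $\V_X^{(2k,k)}$ in exactly one point (Type A); $\dim(Y\cap\V_{\bf 0})=1$ says $Y$ contains exactly one point of $\V_{\bf 0}$ (Type B); and $\dim(Y\cap\V_{\bf 0})=k$ says $Y=\V_{\bf 0}$ (Type C). Writing $\V_{\bf 0}'=\{(x,{\bf 0}):x\in\F_q^k\}$ for the swapped copy, the identity $\sigma(W)\cap\V_{\bf 0}=\sigma\bigl(W\cap\V_{\bf 0}'\bigr)$ gives $\dim(\sigma(W)\cap\V_{\bf 0})=\dim(W\cap\V_{\bf 0}')$. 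Thus the type of $\sigma(W)$ is read off from $\dim(W\cap\V_{\bf 0}')$, i.e.\ from how many of the representatives $(x_i,y_i)$ of $W$ have $y_i={\bf 0}$, a point with last block zero being precisely a point of $\V_{\bf 0}'$.

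Now I would feed in the structure of $\C$ from Corollary~\ref{cor:Cstructq}. The codeword $\V_{\bf 0}\in\C$ satisfies $\V_{\bf 0}\cap\V_{\bf 0}'=\{{\bf 0}\}$, so its image $\sigma(\V_{\bf 0})=\V_{\bf 0}'$ is of Type A; every other codeword has at most one $y_i={\bf 0}$, i.e.\ $\dim(W\cap\V_{\bf 0}')\le 1$, so its image is of Type A or Type B. Hence no element of $\overleftrightarrow{\C}$ escapes Types A and B. For the count I would use that $\C$, being a spread, partitions the $\tfrac{q^k-1}{q-1}$ projective points of $\V_{\bf 0}'$ among its codewords: $\V_{\bf 0}$ receives none, and each remaining codeword receives at most one. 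Therefore exactly $\tfrac{q^k-1}{q-1}$ codewords $W$ satisfy $\dim(W\cap\V_{\bf 0}')=1$, producing exactly $\tfrac{q^k-1}{q-1}$ subspaces of Type B in $\overleftrightarrow{\C}$, and the remaining $q^k+1-\tfrac{q^k-1}{q-1}$ subspaces are of Type A.

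The step requiring the most care will be the bookkeeping in this final count: one must argue projectively (a one-dimensional intersection is a single point), and invoke both that the points of $\V_{\bf 0}'$ are covered by $\C$ and that no single codeword can absorb two of them, the latter being exactly the content of Corollary~\ref{cor:Cstructq}. Everything else reduces to the routine facts that a linear bijection carries spreads to spreads and that the type of $\sigma(W)$ is governed by $\dim(W\cap\V_{\bf 0}')$.
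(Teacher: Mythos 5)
Your proof is correct and takes essentially the approach the paper intends: the paper states this lemma without a written proof, as an immediate consequence of Corollary~\ref{cor:Cstructq}, and your verification (the swap $\sigma$ carries the spread $\C$ onto $\overleftrightarrow{\C}$, the type of $\sigma(W)$ is read off from $\dim(W\cap \V_{\bf 0}')$, and the ``at most one'' condition together with the covering property pins down the counts) is exactly the argument left to the reader. The only minor difference is that you recover the exact number $\frac{q^k-1}{q-1}$ of Type B subspaces from the spread property of $\C$ itself, whereas the paper's preceding lemma already records that the subcode $\C'_0$ contains exactly $\frac{q^k-1}{q-1}$ codewords with exactly one $y_i={\bf 0}$; both counting routes are equally immediate.
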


\begin{theorem}
There exist at least two disjoint $(2k,q^k+1,2k,k)_q$ codes.
\end{theorem}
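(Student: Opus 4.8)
The plan is to place, next to the spread $\overleftrightarrow{\C}$ of Lemma~\ref{lem:C_two_types_q}, a second spread obtained from it by scaling the second block of coordinates, after first removing the unique ``dangerous'' fixed subspace of that scaling. Write $\F_q^{2k}=\F_{q^k}\times\F_{q^k}$, let $U\deff\Span{\{(x,{\bf 0}):x\in\F_{q^k}\}}$, and for $\mu\in\F_{q^k}^{*}$ let $m_\mu$ be the $\F_q$-linear map $(x,y)\mapsto(x,\mu y)$. Being invertible, $m_\mu$ carries spreads to spreads; fixing the first block it preserves the three types (as in Lemma~\ref{lem:same_type}), fixes $\V_{\bf 0}$ and $U$, sends a Type~A subspace with matrix $C$ to the one with matrix $C[\mu]$ (here $[\gamma]$ denotes multiplication by $\gamma\in\F_{q^k}$), and sends a Type~B subspace with kernel direction $\Span{(0,z)}$ to the one with kernel direction $\Span{(0,\mu z)}$. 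The goal is a spread $\dS_0$ avoiding both $U$ and $\V_{\bf 0}$ with $\dS_0\cap m_\alpha(\dS_0)=\emptyset$; then $\dS_0$ and $m_\alpha(\dS_0)$ are the two required codes.

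First I would build $\dS_0$. By Lemma~\ref{lem:C_two_types_q}, $\overleftrightarrow{\C}$ has no Type~C subspace, so $\V_{\bf 0}\notin\overleftrightarrow{\C}$; but $\overleftrightarrow{\C}$ does contain $U$ (the transpose of $\V_{\bf 0}\in\C$), which is $m_\alpha$-fixed and must go. For $\beta\in\F_{q^k}^{*}$ let $t_\beta$ be the shear $(x,y)\mapsto(x,\beta x+y)$; it is $\F_q$-linear, invertible, and fixes $\V_{\bf 0}$ pointwise, so $\dS_0\deff t_\beta(\overleftrightarrow{\C})$ is a spread with no Type~C and with the same Type~B kernel directions and $x$-projections as $\overleftrightarrow{\C}$. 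Now $U\in\dS_0$ iff $t_\beta^{-1}(U)=\Span{\{(x,-\beta x)\}}$ lies in $\overleftrightarrow{\C}$, i.e.\ iff the diagonal subspace $D_{-\beta}\deff\Span{\{(x,-\beta x)\}}$ occurs in $\overleftrightarrow{\C}$. Since $\overleftrightarrow{\C}$ has $\tfrac{q^k-1}{q-1}\ge q+1$ subspaces of Type~B it has fewer than $q^k$ of Type~A, hence fewer than $q^k$ of the $q^k$ diagonals $D_\gamma$, $\gamma\in\F_{q^k}$; as $U=D_0$ is among those present, some proper diagonal is absent, and choosing $\beta$ accordingly gives $U\notin\dS_0$ with $\beta\ne0$.

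Next, the Type~B subspaces. In any spread free of Type~C, the Type~B subspaces meet $\V_{\bf 0}$ in lines that partition $\V_{\bf 0}\setminus\{{\bf 0}\}$, so their kernel directions are exactly the $\tfrac{q^k-1}{q-1}$ points of $\V_{\bf 0}$, in particular pairwise distinct. I would then establish the analogue of Corollary~\ref{cor:S0_typeB}: the $x$-projections $H_W=\pi_1(W)$ of the Type~B subspaces are pairwise distinct hyperplanes. Counting the vectors with a fixed nonzero first block shows each such block lies in exactly $\tfrac{q^{k-1}-1}{q-1}$ of the $H_W$, which is the number of hyperplanes through a point; evaluating $\sum_W|H_W\cap H_{W'}|$ two ways (as in the counting lemma preceding Corollary~\ref{cor:S0_typeB}) then forces every hyperplane to occur with multiplicity one. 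Granting this, if $W\in\dS_0\cap m_\alpha(\dS_0)$ were Type~B, then $m_{\alpha^{-1}}(W)\in\dS_0$ is Type~B with $\pi_1(m_{\alpha^{-1}}(W))=\pi_1(W)$, so by distinctness $m_{\alpha^{-1}}(W)=W$; thus $m_\alpha$ fixes $W$, forcing $\alpha z\in\Span{z}$ and hence $\alpha\in\F_q$, impossible for $k\ge2$ (the case $k=1$ is excluded, the spread of $\cG_q(2,1)$ being unique).

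The hard part is Type~A, and I expect it to be the main obstacle. A shared Type~A subspace of $\dS_0$ and $m_\alpha(\dS_0)$ means two liftings with matrices $C$ and $C'$ in $\dS_0$ satisfying $C=C'[\alpha]$. The spread forces $C'$ invertible, whence $C-C'=C'([\alpha]-\mathrm{id})$ is invertible as well, so this relation is consistent with --- not excluded by --- the spread property and must be ruled out structurally. Writing the Type~A matrices of $\dS_0$ as $[\beta]+\cB$, where $\cB$ consists of $0$ (from $U$) together with the inverses of the invertible liftings of $\C'_0$, the relation becomes $B-B'[\alpha]=[(\alpha-1)\beta]$ with $B,B'\in\cB$. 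The degenerate solutions ($B=0$, $B'=0$, or $B=B'$) each force $B$ or $B'$ to be a multiplication map, i.e.\ a diagonal lifting in $\C'_0$, or else $C=0$ (the excluded $U$); these are killed by the same counting constraint on $\beta$ used to remove $U$ (mirroring the Case~1/Case~2 split for $q=2$). The remaining case, $B=A^{-1}$ and $B'={A'}^{-1}$ with distinct invertible $A,A'\in\C'_0$ and $A^{-1}-{A'}^{-1}[\alpha]=[(\alpha-1)\beta]$, is the crux: one must show this equation has no solution for a suitable $\beta$. I expect this to rest on the explicit MRD/parallel-class structure of $\C'_0$ (or a dimension count showing that ``$A^{-1}-{A'}^{-1}[\alpha]$ is a multiplication map'' is too rare to cover all admissible $\beta$), exactly as the Type~A analysis was the most delicate step of the $q=2$ construction.
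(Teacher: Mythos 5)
You have a genuine gap, and it sits exactly where you say it does: the Type~A case. Everything before it (the shear $t_\beta$ removing $U$, the distinctness of kernel directions and of hyperplane projections for Type~B subspaces) is plausible, but the final step --- showing that $B-B'[\alpha]=[(\alpha-1)\beta]$ has no solution for a suitable $\beta$ --- is not proved; you only state that you ``expect'' it to follow from the MRD structure or a dimension count. That step is the entire content of the theorem in your approach, and it is far from routine: for $q>2$ the spread $\overleftrightarrow{\C}$ contains $q^k+1-\frac{q^k-1}{q-1}$ subspaces of Type~A (on the order of $q^k-q^{k-1}$ of them), not just two as in the $q=2$ construction of Section~\ref{sec:n=2k} that you are mirroring. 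So the coincidence analysis would have to rule out the relation $C=C'[\alpha]$ over roughly $q^{2k}$ pairs of Type~A matrices, where each pair whose difference $B-B'[\alpha]$ happens to be a multiplication map kills one value of $\beta$, while only about $q^k$ values of $\beta$ are available; nothing in your sketch shows that any admissible $\beta$ survives. This is precisely what the paper's remark that ``since we construct only two disjoint spreads, the analysis will be much simpler'' is signaling: the scaling-orbit method is calibrated to $q=2$, where the Type~A part of the spread is tiny, and it does not transfer.

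The paper's proof avoids any geometric relation between the two spreads and is a short pigeonhole argument. By Corollary~\ref{cor:MRD_parts}, $\C_0$ can be partitioned into $q^k$ codes with parameters $(2k,q^k,2k,k)_q$; every codeword of $\C_0$ is a lifted matrix, hence of Type~A. Since $\overleftrightarrow{\C}$ contains only $q^k+1-\frac{q^k-1}{q-1}<q^k$ subspaces of Type~A, at least one part $\C'$ of the partition contains none of them. Then $\C'\cup\{\V_{\bf 0}\}$ is a $(2k,q^k+1,2k,k)_q$ code consisting of $q^k$ Type~A subspaces, none of which lies in $\overleftrightarrow{\C}$, together with the Type~C subspace $\V_{\bf 0}$, which $\overleftrightarrow{\C}$ does not contain by Lemma~\ref{lem:C_two_types_q}; and no Type~B subspace can be shared since $\C'\cup\{\V_{\bf 0}\}$ has none. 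Hence the two codes are disjoint. As written, your argument reduces the theorem to an unproved (and, for general $q$, genuinely delicate) equation about inverses of MRD codewords, so it does not constitute a proof.
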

\begin{proof}
By Corollary~\ref{cor:MRD_parts}, $\C_0$ can be partitioned into $q^k$   $(2k,q^k,2k,k)_q$
codes. Since $q^k > q^k +1 - \frac{q^k-1}{q-1}$ it follows that at least
one of these $q^k$ codes does not contain any of the $q^k +1 - \frac{q^k-1}{q-1}$
subspaces of Type A which are contained in $\overleftrightarrow{\C}$.
Let $\C'$ be this code. $\C' \cup \V_0$ is a $(2k,q^k+1,2k,k)_q$ code
which contains $q^k$ subspaces of Type A and one subspace of Type C.
Therefore, $\overleftrightarrow{\C}$ and $\C'$ are disjoint.
\end{proof}
\begin{cor}
There exist two disjoint spreads in $\cG_q (2k,k)$, $q > 2$.
\end{cor}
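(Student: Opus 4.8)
The plan is to recognize that a $(2k,q^k+1,2k,k)_q$ code is nothing other than a spread in $\cG_q(2k,k)$, so that the corollary becomes an immediate restatement of the preceding theorem.

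First I would translate the minimum-distance condition into a disjointness condition. For any two codewords $X,Y$ of such a code we have $d_S(X,Y) = 2k - 2\dim(X \cap Y) \geq 2k$, which forces $\dim(X \cap Y) = 0$; hence the $q^k+1$ codewords are pairwise disjoint $k$-dimensional subspaces.

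Next I would verify, by a counting argument, that these pairwise disjoint subspaces in fact partition all the nonzero vectors. Each $k$-dimensional subspace contains exactly $q^k-1$ nonzero vectors, and by pairwise disjointness these vector sets are mutually disjoint. Thus the $q^k+1$ codewords cover exactly $(q^k+1)(q^k-1) = q^{2k}-1$ nonzero vectors, which is precisely the total number of nonzero vectors in $\F_q^{2k}$. Consequently every nonzero vector of $\F_q^{2k}$ lies in exactly one codeword, which is the defining property of a spread in $\cG_q(2k,k)$.

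Finally I would invoke the theorem, which guarantees two disjoint $(2k,q^k+1,2k,k)_q$ codes; by the identification just established, both are spreads, and since they share no common $k$-subspace they form two disjoint spreads. There is essentially no obstacle here: the only point to check is the counting identity $(q^k+1)(q^k-1) = q^{2k}-1$, which confirms that a code with these parameters saturates the spread size and therefore must be a spread rather than merely a partial packing.
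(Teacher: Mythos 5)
Your proposal is correct and follows essentially the same route as the paper: the paper states this corollary without proof as an immediate consequence of the preceding theorem, and your argument simply makes explicit the routine identification that a $(2k,q^k+1,2k,k)_q$ code is a spread in $\cG_q(2k,k)$ (pairwise disjointness from the distance $2k$, saturation of the spread size from $(q^k+1)(q^k-1)=q^{2k}-1$). Nothing is missing; this is exactly the implicit reasoning the paper relies on.
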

\begin{cor}
There exist two disjoint $k$-spreads in PG($2k+1,q$), $q > 2$.
\end{cor}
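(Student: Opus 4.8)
The plan is to derive this statement directly from the preceding corollary by means of the correspondence between Grassmannian spreads and projective spreads established in Section~\ref{sec:conf}, taking care of a one-step shift in the dimension index. Recall that a spread in $\cG_q(n,k)$, namely a partition of the nonzero vectors of $\F_q^n$ into pairwise disjoint $k$-dimensional subspaces, is exactly a $(k-1)$-spread in PG($n-1,q$), and that pairwise disjoint spreads in $\cG_q(n,k)$ correspond to pairwise disjoint $(k-1)$-spreads in PG($n-1,q$). So the whole task reduces to producing two disjoint Grassmannian spreads whose translation yields $k$-spreads in PG($2k+1,q$).

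The only genuine issue is to get the indices right. To land in PG($2k+1,q$) with $k$-dimensional projective subspaces I must have $n-1=2k+1$ and the projective spread dimension $k-1$ equal to $k$; equivalently, I apply the Grassmannian statement with its internal dimension parameter raised by one. Concretely, I invoke the preceding corollary with $k$ replaced by $k+1$: it furnishes, for every $q>2$, two disjoint spreads in $\cG_q(2(k+1),k+1)=\cG_q(2k+2,k+1)$. The construction behind that corollary builds a $(2(k+1),q^{2(k+1)},2k,k+1)_q$ $\CMRD$ and then adjoins $\V_0$, and since $k+1\geq 2$ for $k\geq 1$ all of its hypotheses hold with no degeneracy, so the substitution is legitimate.

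Finally I read off the translation. Each of the two disjoint spreads in $\cG_q(2k+2,k+1)$ is a partition of the nonzero vectors of $\F_q^{2k+2}$ into $(k+1)$-dimensional subspaces, hence a partition of the points of PG($2k+1,q$) into $k$-dimensional projective subspaces, that is, a $k$-spread. Because the passage from a vector subspace to the projective subspace it spans is a bijection in the relevant dimension, two spreads sharing no common $(k+1)$-dimensional subspace translate into two $k$-spreads sharing no common $k$-subspace, so disjointness is preserved. I do not expect any real obstacle here: all the mathematical content already lives in the theorem and the preceding corollary, and what remains is purely a restatement in projective language under the shift $k\mapsto k+1$. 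The single point I would verify carefully is exactly that shift, to be certain the ambient space comes out as PG($2k+1,q$) and not PG($2k-1,q$).
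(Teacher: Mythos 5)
Your proposal is correct and is essentially the paper's own (implicit) derivation: the corollary follows from the preceding one by applying it with $k$ replaced by $k+1$, then translating the two disjoint spreads in $\cG_q(2k+2,k+1)$ into two disjoint $k$-spreads in PG($2k+1,q$) via the correspondence of Section~\ref{sec:conf}. Your attention to the index shift is exactly the right point to check, and you resolved it correctly.
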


\section{A recursive construction}
\label{sec:recursive}

Let $n=\ell k$, where $\ell \geq 2$.
Let $\dS_i$, $0 \leq i \leq M-1$, be a set of $M$ pairwise disjoint
spreads in $\cG_q(n,k)$. We will describe a construction
for $M$ pairwise disjoint spreads in $\cG_q( n+k,k)$.

First we will define a \emph{partial Grassmannian} $\cG_q(n_1,n_2,k)$,
$n_1 > n_2 \geq k$, as the set of all $k$-dimensional subspaces from $\F_q^{n_1}$
which are not contained in a given $n_2$-dimensional subspace $U$ of $\F_q^{n_1}$.
It can be readily verified that $\V^{(n,k)}$ is a partial Grassmannian
$\cG_q(n,n-k,k)$, where $\V_0^{(n,k)}$ is the $(n-k)$-dimensional
subspace $U$.
A spread in $\cG_q(n_1,n_2,k)$ is a set $\dS$ of pairwise disjoint $k$-dimensional
subspaces from $\cG_q(n_1,n_2,k)$ such that each nonzero element of $\F_q^{n_1} \setminus U$
is contained in exactly one element of~$\dS$. A parallelism of $\cG_q(n_1,n_2,k)$
is a set of pairwise disjoint spreads in $\cG_q(n_1,n_2,k)$ such that each $k$-dimensional
subspace of $\cG_q(n_1,n_2,k)$ is contained in exactly one of the spreads.
Beutelspacher~\cite{Beu90} proved that if $k=2$ then such a parallelism exists
if $n_2 \geq 2$, $n_1 - n_2 = 2^i$, for all $i \geq 1$ and any $q > 2$.
If $k=2$ and $q=2$ then such a parallelism exists
if and only if $n_2 \geq 3$ and $n_1 - n_2$ is even.

In this section we are going to extend this results for $k > 2$.
Based on these parallelisms we
will present a recursive construction for pairwise disjoint spreads
in $\cG_q(n,k)$, where $k$ divides~$n$ and $n > k$.

The following structure defined in~\cite{EtSi13} is
the key for our construction.
A \emph{subspace transversal  design} of groupsize $q^{n-k}$,
block dimension $k$, and \emph{strength}~$t$, denoted by
$\text{STD}_q (t, k, n-k)$, is a triple
$(\V,\mathbb{G},\mathbb{B})$, where
$\V$ is a set of points,
$\mathbb{G}$ is a set of groups, and $\mathbb{B}$ is
a set of blocks. These three sets must satisfy the following five
properties:

\begin{enumerate}
\item $\V$ is a set of size $\frac{q^k-1}{q-1} q^{n-k}$ (the \emph{points}).
$\bigcup_{X \in \cG_q(k,1)} \V_X^{(n,k)}$ is used as the set of points $\V$.

\item $\mathbb{G}$ is a partition of $\V$ into
$\frac{q^k-1}{q-1}$ classes of size $q^{n-k}$ (the \emph{groups});
the groups which are used are defined by
$\V_X^{(n,k)}$, $X \in \cG_q(k,1)$.

\item $\mathbb{B}$ is a collection of $k$-dimensional
subspaces of $\F_q^n$ which contain nonzero vectors
only from $\mathbb{V}^{(n,k)}$ (the
\emph{blocks});

\item each block meets each group in exactly one point;

\item every $t$-dimensional subspace (with points from $\mathbb{V}$) which
meets each group in at most one point is contained in exactly
one block.
\end{enumerate}

An $\text{STD}_q(t, k, m)$ is \emph{resolvable} if the set $\B$
can be partitioned into sets $\B_1,...,\B_s$,
where each vector of $\mathbb{V}^{(n,k)}$ is contained in exactly one block of
each $\B_i$, $1 \leq i \leq s$. The sets $\B_1,...,\B_s$ are called
\emph{parallel classes}. The following theorem was established in~\cite{EtSi13}.

\begin{theorem}
\label{thm:MRD=STD_Q}
The codewords of an $(n,k,\delta)_q$
$\CMRD$ form the blocks of a resolvable
$\text{STD}_q(k-\delta+1,k, n-k)$, with the set of groups $\V_X^{(n,k)}$,
$X\in \cG_q(k,1)$.
\end{theorem}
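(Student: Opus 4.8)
The plan is to verify the five defining properties of a subspace transversal design for the triple $(\V,\mathbb{G},\mathbb{B})$ in which the points are $\V=\bigcup_{X\in\cG_q(k,1)}\V_X^{(n,k)}$, the groups are the sets $\V_X^{(n,k)}$ with $X\in\cG_q(k,1)$, and the blocks $\mathbb{B}$ are the codewords of the given $(n,k,\delta)_q$ $\CMRD$; resolvability will then be read off directly from Lemma~\ref{lem:resolv}. Properties~1--4 are essentially bookkeeping. Each codeword is the row span of a matrix $[I_k~A]$, hence equals $\{(v,vA):v\in\F_q^k\}$, and every nonzero vector has a nonzero projection onto its first $k$ coordinates and so lies in $\V^{(n,k)}$, which gives Property~3. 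Counting projective points shows $|\V_X^{(n,k)}|=q^{n-k}$ and $|\V|=\frac{q^k-1}{q-1}q^{n-k}$, and the sets $\V_X^{(n,k)}$ partition $\V$ into $\frac{q^k-1}{q-1}$ groups, giving Properties~1 and~2. For Property~4, fix $X=\Span{x}$ and a block $\{(v,vA)\}$; the block vectors whose first $k$ coordinates lie in $X$ are exactly $\{(\lambda x,\lambda xA):\lambda\in\F_q\}$, whose nonzero members form the single projective point $\Span{(x,xA)}$, so every block meets every group in exactly one point.

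The heart of the argument is Property~5, the strength condition with $t=k-\delta+1$. Let $W$ be a $t$-dimensional subspace all of whose nonzero vectors lie in $\V^{(n,k)}$ and which meets each group in at most one point. The projection $\pi$ onto the first $k$ coordinates is injective on $W$, since a nonzero vector in its kernel would be zero in its first $k$ coordinates and thus fall outside $\V^{(n,k)}$; hence $U=\pi(W)$ is a $t$-dimensional subspace of $\F_q^k$ and $W$ is the graph $\{(u,\phi(u)):u\in U\}$ of a linear map $\phi\colon U\to\F_q^{n-k}$. A block $\{(v,vA)\}$ contains $W$ precisely when $uA=\phi(u)$ for every $u\in U$, that is, when the MRD codeword $A$ restricts to $\phi$ on $U$. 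I would then invoke the minimum-distance property of the MRD code: the restriction map $A\mapsto A|_U$ from codewords to linear maps $U\to\F_q^{n-k}$ is injective, because a codeword in its kernel has $U$ inside the kernel of $v\mapsto vA$ and therefore rank at most $k-t=\delta-1<\delta$, forcing it to vanish. Since the code has $q^{(n-k)(k-\delta+1)}=q^{(n-k)t}$ codewords --- exactly the number of linear maps $U\to\F_q^{n-k}$ --- injectivity upgrades to a bijection, so there is exactly one block containing $W$.

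Finally, resolvability is immediate: Lemma~\ref{lem:resolv} partitions the codewords into $q^{(n-k)(k-\delta)}$ parallel classes, each of size $q^{n-k}$, in which every element of $\V^{(n,k)}$ lies in exactly one codeword, which is precisely the partition of $\mathbb{B}$ required by the definition. The main obstacle is Property~5; once $W$ is identified with the graph of $\phi$ over $U$, the entire claim rests on the rank-distance-plus-dimension-count bijection, and the only delicate point is matching the strength $t=k-\delta+1$ against the rank bound $k-t=\delta-1$ so that the kernel argument reaches below the minimum distance.
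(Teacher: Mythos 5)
Your proposal is correct, but there is nothing in the paper to compare it against: the paper does not prove this theorem at all, it simply quotes it from~\cite{EtSi13} (``The following theorem was established in\ldots''). What you have written is, in effect, a self-contained reconstruction of the argument from that reference, and it holds up. Properties 1--4 are indeed routine once a codeword is written as $\{(v,vA): v\in \F_q^k\}$. The substance is Property~5, and your two steps are exactly the right ones: first, the hypothesis that all nonzero vectors of the $t$-dimensional subspace $W$ lie in $\V^{(n,k)}$ makes the projection $\pi$ onto the first $k$ coordinates injective on $W$, so $W$ is the graph of a linear map $\phi\colon U\to\F_q^{n-k}$ with $U=\pi(W)$ of dimension $t=k-\delta+1$; second, on the \emph{linear} MRD code the restriction map $A\mapsto A|_U$ is injective, since a nonzero codeword vanishing on $U$ would have rank at most $k-t=\delta-1<\delta$, and the cardinality match $q^{(n-k)(k-\delta+1)}=\left(q^{n-k}\right)^t$ upgrades injectivity to a bijection, giving both existence and uniqueness of the block through $W$. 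Deducing resolvability from Lemma~\ref{lem:resolv} is legitimate and not circular, since that lemma is stated in the paper independently of the theorem. One cosmetic point deserves care for $q>2$: the groups $\V_X^{(n,k)}$ have $q^{n-k}$ \emph{points} but $(q-1)q^{n-k}$ vectors, so ``meets each group in exactly (at most) one point'' must be read projectively, i.e.\ up to scalar multiples; you do this correctly in Property~4 by passing to $\Span{(x,xA)}$, and the same convention is what makes your Property~5 argument consistent (two independent vectors of $W$ projecting into the same $X$ would produce a nonzero vector of $W$ with zero first $k$ coordinates, which the hypothesis excludes). With that reading made explicit, the proof is complete.
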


Theorem~\ref{thm:MRD=STD_Q} is the key for our constructions.
A resolvable STD$_q(k,k,n-k)$ consists of $q^{(n-k)(k-1)}$
spreads of $\V^{(n,k)}$, i.e. a parallelism in $\cG_q(n,n-k,k)$.
A resolvable STD$_q(k,k,n-k)$ is obtained from an $(n,k,1)_q$ $\CMRD$,
which is constructed from a $[k \times (n-k),(n-k)k,1]_q$ MRD code.
Thus, we have

\begin{theorem}
\label{thm:parallelism}
If $k=n_1 - n_2$ then there exists a parallelism in $\cG_q(n_1,n_2,k)$.
\end{theorem}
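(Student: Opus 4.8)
The plan is to realize the desired parallelism as a resolvable subspace transversal design and to invoke Theorem~\ref{thm:MRD=STD_Q} with $\delta=1$. First I would normalize the ambient data: write $n \deff n_1$, so that the distinguished subspace $U$ has dimension $n_2 = n_1 - k = n-k$. Since $GL(n,q)$ acts transitively on the $(n-k)$-dimensional subspaces of $\F_q^n$, and a linear isomorphism carries $k$-subspaces to $k$-subspaces, spreads to spreads, and parallelisms to parallelisms, I may assume without loss of generality that $U = \V_0^{(n,k)}$. Under this choice the point set to be covered is $\F_q^n \setminus U = \V^{(n,k)}$, and, as noted just before the theorem, $\cG_q(n,n-k,k)$ is exactly this partial Grassmannian; any parallelism produced here transports back to a general $U$ by the inverse isomorphism.

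Next I would produce the required $\CMRD$. Taking $\delta=1$ in the bound~(\ref{eq:MRD}) gives $\varrho \leq (n-k)k$, and this value is attained trivially by the full matrix space $\F_q^{k\times(n-k)}$, whose minimum rank distance is $1$; hence a $[k\times(n-k),(n-k)k,1]_q$ MRD code $\cC$ exists. Lifting $\cC$ yields, by Theorem~\ref{trm:param lifted MRD}, an $(n,k,1)_q$ $\CMRD$, that is an $(n,q^{(n-k)k},2,k)_q$ code. Applying Theorem~\ref{thm:MRD=STD_Q} with $\delta=1$, its codewords form the blocks of a resolvable $\text{STD}_q(k,k,n-k)$ whose groups are the classes $\V_X^{(n,k)}$, $X\in\cG_q(k,1)$; and by Lemma~\ref{lem:resolv} (equivalently Corollary~\ref{cor:MRD_parts}) this design splits into $q^{(n-k)(k-1)}$ parallel classes.

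It remains to translate the design-theoretic statement into the language of the partial Grassmannian, which is where the real bookkeeping lives. Each block is a $k$-dimensional subspace meeting every group in exactly one point, so its nonzero vectors lie entirely in $\V^{(n,k)}$; conversely, strength $t=k$ forces every $k$-subspace whose points lie in $\V^{(n,k)}$ (equivalently, meeting each group in at most one point) to be contained in a unique block, and being itself $k$-dimensional it must equal that block. Thus the blocks are exactly the elements of $\cG_q(n,n-k,k)$. A parallel class, by definition of resolvability, is a family of blocks covering each point of $\V^{(n,k)}$ exactly once, which is precisely a spread of $\cG_q(n,n-k,k)$; and the fact that every block lies in exactly one parallel class says exactly that these spreads partition all of $\cG_q(n,n-k,k)$, i.e.\ form a parallelism. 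I expect the only genuinely delicate point to be this identification of blocks with the elements of the partial Grassmannian via the strength condition; the existence of the design itself is already delivered by Theorem~\ref{thm:MRD=STD_Q}, so no new combinatorial construction is needed.
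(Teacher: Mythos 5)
Your proposal is correct and takes essentially the same route as the paper: the paper's proof likewise applies Theorem~\ref{thm:MRD=STD_Q} with $\delta=1$ to the $(n,k,1)_q$ $\CMRD$ lifted from a $[k \times (n-k),(n-k)k,1]_q$ MRD code, and reads the parallel classes of the resulting resolvable $\text{STD}_q(k,k,n-k)$ as the spreads of a parallelism in $\cG_q(n,n-k,k)$. The extra details you supply --- the $GL(n,q)$ reduction to the standard choice of $U$, the existence of the $\delta=1$ MRD code as the full matrix space $\F_q^{k\times(n-k)}$, and the identification of the blocks with the elements of $\cG_q(n,n-k,k)$ via the strength-$k$ property --- are precisely the steps the paper leaves implicit.
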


If there exists $M$ pairwise disjoint spreads in $\cG_q(n-k,k)$ then
they can be combined with~$M$ pairwise disjoint spreads in $\cG_q(n,n-k,k)$ which
exist by Theorem~\ref{thm:parallelism} to obtain the following theorem.

\begin{theorem}
If there exist $M$ pairwise disjoint spreads in $\cG_q(n-k,k)$ then
there exist $M$ pairwise disjoint spreads in $\cG_q(n,k)$.
\end{theorem}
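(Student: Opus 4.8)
The plan is to assemble each target spread of $\cG_q(n,k)$ from two complementary pieces: one class of a parallelism of the partial Grassmannian $\cG_q(n,n-k,k)$, which takes care of all the vectors outside the distinguished $(n-k)$-subspace, and one of the hypothesised spreads of $\cG_q(n-k,k)$, reinterpreted as a spread living inside that subspace. First I would fix the decomposition in which $U=\V_0^{(n,k)}$ is the $(n-k)$-dimensional subspace of vectors whose first $k$ coordinates vanish; then $\V^{(n,k)}$ is precisely the set of nonzero vectors of $\F_q^n$ lying outside $U$, while $U\setminus\{{\bf 0}\}$ is the set of nonzero vectors inside $U$. By Theorem~\ref{thm:parallelism} (applied with $n_1=n$, $n_2=n-k$) there is a parallelism of $\cG_q(n,n-k,k)$, i.e.\ a family of pairwise disjoint spreads $\cP_0,\cP_1,\dots$ each of which partitions $\V^{(n,k)}$ into $k$-subspaces not contained in $U$; since it arises from a resolvable $\text{STD}_q(k,k,n-k)$, this family has $q^{(n-k)(k-1)}$ members. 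Identifying $U$ with $\F_q^{n-k}$, the given $M$ pairwise disjoint spreads of $\cG_q(n-k,k)$ become $M$ pairwise disjoint spreads $\cQ_0,\dots,\cQ_{M-1}$ whose members are $k$-subspaces lying inside $U$.

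I would then set $\dS_i \deff \cP_i \cup \cQ_i$ for $0\le i\le M-1$ and verify two things. First, each $\dS_i$ is a spread of $\cG_q(n,k)$: every block $P$ of $\cP_i$ has all of its nonzero vectors in $\V^{(n,k)}$, hence $P\cap U=\{{\bf 0}\}$, so $P$ is disjoint from each member of $\cQ_i$, which lies inside $U$; and distinct blocks within $\cP_i$, and distinct members within $\cQ_i$, are already disjoint. Since $\cP_i$ partitions the nonzero vectors outside $U$ and $\cQ_i$ partitions those inside $U$, every nonzero vector of $\F_q^n$ lies in exactly one member of $\dS_i$, so $\dS_i$ is a spread. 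Second, for $i\ne j$ the spreads $\dS_i$ and $\dS_j$ are disjoint: a block of $\cP_i$ is never a member of $\cQ_j$, since the former is not contained in $U$ while the latter is; $\cP_i$ and $\cP_j$ share no block because the parallelism classes are pairwise disjoint; and $\cQ_i$ and $\cQ_j$ share no member because the original $M$ spreads are pairwise disjoint.

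The only point needing genuine care, and the main thing that could fail, is that the assignment $i\mapsto(\cP_i,\cQ_i)$ presupposes at least $M$ parallelism classes, i.e.\ $M\le q^{(n-k)(k-1)}$. I would settle this by a counting bound: fixing a nonzero vector $v\in\F_q^{n-k}$, each of the $M$ disjoint spreads of $\cG_q(n-k,k)$ contains a unique $k$-subspace through $v$, and these are distinct, so $M\le\sbinomq{n-k-1}{k-1}$. The $q$-degree $(k-1)(n-2k)$ of this Gaussian coefficient is at most its counterpart $(k-1)(n-k)$, and a standard estimate for Gaussian coefficients then yields $M\le q^{(n-k)(k-1)}$, so enough classes are always available. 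Combining the three verifications produces the required $M$ pairwise disjoint spreads of $\cG_q(n,k)$.
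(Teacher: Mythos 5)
Your proposal is correct and takes essentially the same route as the paper: both apply Theorem~\ref{thm:parallelism} with $n_1=n$, $n_2=n-k$ to get a parallelism of $\cG_q(n,n-k,k)$, identify the removed $(n-k)$-subspace with $\F_q^{n-k}$ carrying the $M$ given pairwise disjoint spreads, and take the pairwise unions $\cP_i \cup \cQ_i$. Your extra step of checking that the parallelism supplies at least $M$ classes, via $M \le \sbinomq{n-k-1}{k-1} \le q^{(n-k)(k-1)}$, is a detail the paper leaves implicit, and you handle it correctly.
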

\begin{proof}
By Theorem~\ref{thm:parallelism}, there exists $M$ pairwise disjoint spreads
in $\cG_q(n,n-k,k)$, in which the removed $(n-k)$-dimensional subspace
is isomorphic to $\cG_q(n-k,k)$. Let $\dS_1 , \dS_2 , \ldots , \dS_M$, be these spreads.
Let $\dT_1, \dT_2 , \ldots , \dT_M$ be the $M$ pairwise disjoint spreads in $\cG_q(n-k,k)$.
Then $\dS_1 \cup \dT_1 , \dS_2 \cup \dT_2 , \ldots , \dS_M \cup \dT_M$ is a set of
$M$ pairwise disjoint spreads in $\cG_q(n,k)$.
\end{proof}
\begin{cor}
There exists a set of $2^k-1$ pairwise disjoint spreads in $\cG_2(n,k)$ if $n >k$
and $k$ divides~$n$.
\end{cor}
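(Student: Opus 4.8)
The plan is to prove the corollary by a short induction on the integer $\ell \deff n/k$, which is well defined and satisfies $\ell \geq 2$ because $k$ divides $n$ and $n > k$. All of the substantive content is already in place; what remains is to assemble the base case established in Section~\ref{sec:n=2k} with the recursive theorem stated immediately before the corollary, and to check that the divisibility hypotheses propagate.

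For the base case $\ell = 2$, that is $n = 2k$, the corollary at the end of Section~\ref{sec:n=2k} already asserts the existence of $2^k-1$ pairwise disjoint spreads in $\cG_2(2k,k)$, so nothing further is required there.

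For the inductive step I would assume $\ell \geq 3$ and that $\cG_2((\ell-1)k,k)$ admits $2^k-1$ pairwise disjoint spreads; this is legitimate as an induction hypothesis since $(\ell-1)k > k$ and $k$ divides $(\ell-1)k$. I would then invoke the theorem immediately preceding the corollary with $q = 2$, $M = 2^k-1$, and ambient dimension $n = \ell k$. Its hypothesis is exactly the existence of $M$ pairwise disjoint spreads in $\cG_2(n-k,k) = \cG_2((\ell-1)k,k)$, which the induction hypothesis supplies, and its conclusion is the existence of $M$ pairwise disjoint spreads in $\cG_2(\ell k,k) = \cG_2(n,k)$. This closes the induction and yields the corollary for every admissible $n$.

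The only routine verification along the way is that the divisibility conditions survive each recursive step, and they do: from $k \mid n$ we get $k \mid (n-k)$, so the smaller Grassmannian $\cG_2(n-k,k)$ genuinely supports spreads, and the recursive theorem's internal appeal to Theorem~\ref{thm:parallelism} is valid because the removed subspace has dimension $n-k$, so $n_1 - n_2 = k$ as that theorem demands. I do not expect any real obstacle at this level; the genuine difficulty is entirely upstream, residing in the explicit construction of the base family of $2^k-1$ disjoint spreads in $\cG_2(2k,k)$ carried out in Section~\ref{sec:n=2k}, together with the recursive theorem's reliance on the full parallelism of the partial Grassmannian furnished by Theorem~\ref{thm:parallelism}.
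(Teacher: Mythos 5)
Your proof is correct and is essentially the paper's own (implicit) argument: the corollary is meant to follow by exactly the induction you describe, with the base case $n=2k$ supplied by the $2^k-1$ pairwise disjoint spreads in $\cG_2(2k,k)$ constructed in Section~\ref{sec:n=2k} and the inductive step supplied by the recursive theorem (via Theorem~\ref{thm:parallelism}) stated just before the corollary. Your added check that the divisibility hypotheses propagate ($k \mid n$ gives $k \mid (n-k)$, and the removed subspace has codimension $k$) is routine but correctly fills in the only bookkeeping the paper leaves unsaid.
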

\begin{cor}
There exist two pairwise disjoint spreads in $\cG_q(n,k)$ if $n >k$
and $k$ divides $n$.
\end{cor}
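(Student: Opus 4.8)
The plan is to argue by induction on the integer $\ell = n/k \geq 2$, feeding the two explicit base-case constructions of Sections~\ref{sec:n=2k} and~\ref{sec:any_q} into the preceding combining theorem. Since $k \mid n$ and $n > k$, we may write $n = \ell k$ with $\ell \geq 2$, and the induction will be on $\ell$.

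\textbf{Base case} ($\ell = 2$, i.e. $n = 2k$). Here I would simply invoke the constructions already established. For $q > 2$ the corollary at the end of Section~\ref{sec:any_q} produces two disjoint spreads in $\cG_q(2k,k)$ directly. For $q = 2$ (with $k \geq 2$, which is forced by the parameters of the $(2k,k,k-1)_q$ $\CMRD$ underlying the construction) the corollary at the end of Section~\ref{sec:n=2k} yields $2^k - 1 \geq 2$ pairwise disjoint spreads in $\cG_2(2k,k)$, and discarding all but two of them gives the required pair. Thus in every admissible case $\cG_q(2k,k)$ carries two disjoint spreads.

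\textbf{Inductive step}. Suppose $\ell \geq 3$ and that two disjoint spreads exist in $\cG_q((\ell-1)k, k) = \cG_q(n-k,k)$; this invocation is legitimate because $k \mid (n-k)$ and $n-k > k$. Applying the preceding theorem with $M = 2$ then yields two disjoint spreads in $\cG_q(n,k)$. Concretely, that theorem combines the two given spreads $\dT_1, \dT_2$ of $\cG_q(n-k,k)$ --- viewed as spreads of the removed $(n-k)$-dimensional subspace $U$ --- with two spreads $\dS_1, \dS_2$ drawn from the parallelism of the partial Grassmannian $\cG_q(n,n-k,k)$ guaranteed by Theorem~\ref{thm:parallelism} (whose hypothesis $k = n - (n-k)$ is met), forming $\dS_1 \cup \dT_1$ and $\dS_2 \cup \dT_2$. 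Disjointness of this pair is inherited from the disjointness of the $\dS_i$ and of the $\dT_i$, together with the observation that no block of an $\dS_i$ lies in $U$ while every block of a $\dT_j$ does, so blocks of the two kinds can never coincide.

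The recursion terminates at $\cG_q(n,k)$ for every $n = \ell k$ with $\ell \geq 2$, which is precisely the range $k \mid n$, $n > k$. I expect no genuine obstacle in the corollary itself: all the substance resides in the base-case constructions and in the combining theorem, both already available. The only points that require care are bookkeeping ones --- making sure the recursion bottoms out at $\ell = 2$ rather than descending to the degenerate $\cG_q(k,k)$ (which has a unique spread and for which $n > k$ fails), and checking that the base case indeed supplies at least two disjoint spreads in both the $q = 2$ and $q > 2$ regimes.
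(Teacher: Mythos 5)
Your proposal is correct and follows essentially the same route as the paper: the base case $n=2k$ is supplied by the constructions of Sections~3 and~4 (using $2^k-1\geq 2$ for $q=2$ and the two disjoint spreads for $q>2$), and the step from $\cG_q(n-k,k)$ to $\cG_q(n,k)$ is exactly the paper's combining theorem built on the parallelism of $\cG_q(n,n-k,k)$. Your explicit induction on $\ell=n/k$, and the remark that the recursion bottoms out at $\ell=2$, merely spell out what the paper leaves implicit.
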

\begin{cor}
There exists a set of $2^{k+1}-1$ pairwise disjoint $k$-spreads in PG($n,2$) if $n >k$
and $k+1$ divides $n+1$.
\end{cor}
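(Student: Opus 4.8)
The plan is to prove this corollary by directly translating the preceding Grassmannian statement into the language of projective geometry, using the dictionary established in Section~\ref{sec:conf}. Recall from that section that a spread in $\cG_q(N,K)$ is precisely a $(K-1)$-spread in PG($N-1,q$), and more generally that a set of $M$ pairwise disjoint spreads in $\cG_q(N,K)$ is exactly a set of $M$ pairwise disjoint $(K-1)$-spreads in PG($N-1,q$). Thus the whole task reduces to choosing the parameters of the Grassmannian correctly and checking that the divisibility and inequality hypotheses match up.

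The substitution I would make is $N=n+1$, $K=k+1$, and $q=2$. Under this choice a spread in $\cG_2(n+1,k+1)$ is a $k$-spread in PG($n,2$), which is exactly the object sought. It then remains to verify that the hypotheses of the preceding Grassmannian corollary (the one asserting $2^k-1$ pairwise disjoint spreads in $\cG_2(n,k)$ whenever $n>k$ and $k$ divides $n$) translate to the hypotheses here: that corollary requires $K \mid N$ and $N>K$, and under the substitution $K\mid N$ reads $(k+1)\mid(n+1)$ while $N>K$ reads $n+1>k+1$, i.e.\ $n>k$. Both are precisely the assumptions of the present corollary. Invoking that corollary with these parameters furnishes $2^{K}-1 = 2^{k+1}-1$ pairwise disjoint spreads in $\cG_2(n+1,k+1)$, and reading each of them back through the dictionary yields $2^{k+1}-1$ pairwise disjoint $k$-spreads in PG($n,2$), as claimed.

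There is essentially no hard step here; the only thing that could go wrong is the index bookkeeping, since both the ambient dimension and the subspace dimension drop by one when passing from $\cG_2(N,K)$ to PG($N-1,2$). I would double-check that the shift is applied consistently to $N$ and to $K$, so that the count comes out as $2^{k+1}-1$ and not $2^{k}-1$, and that pairwise disjointness survives the passage. The latter is immediate: the correspondence is a bijection between the $(k+1)$-dimensional subspaces of $\F_2^{n+1}$ and the $k$-subspaces of PG($n,2$), so two Grassmannian spreads sharing no common member are carried to two projective spreads sharing no common member. Hence the collection of $2^{k+1}-1$ spreads remains pairwise disjoint after translation.
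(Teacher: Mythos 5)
Your proposal is correct and matches the paper's intent exactly: the paper derives this corollary immediately from the preceding Grassmannian corollary ($2^k-1$ pairwise disjoint spreads in $\cG_2(n,k)$ when $k \mid n$ and $n>k$) via the dictionary from Section~\ref{sec:conf}, using precisely the substitution $N=n+1$, $K=k+1$ that you carry out. Your extra care with the index shift and the preservation of disjointness is exactly the (routine) verification the paper leaves implicit.
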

\begin{cor}
There exist two pairwise disjoint spreads $k$-spreads in PG($n,q$) if $n >k$
and $k+1$ divides $n+1$.
\end{cor}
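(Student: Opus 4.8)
The plan is to obtain this statement as an immediate translation of the Grassmannian corollary established just above, which asserts that two pairwise disjoint spreads exist in $\cG_q(n,k)$ whenever $n > k$ and $k$ divides $n$. I would not reprove anything; instead I would invoke the dictionary between spreads in the Grassmannian and spreads in projective geometry that was set up in Section~\ref{sec:conf}.

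First I would recall the correspondence: a $j$-dimensional subspace of $\F_q^m$ is a $(j-1)$-subspace of PG($m-1,q$), so a spread in $\cG_q(m,j)$ is exactly a $(j-1)$-spread in PG($m-1,q$), and two such Grassmannian spreads are disjoint if and only if the corresponding $(j-1)$-spreads share no common $(j-1)$-subspace of PG($m-1,q$). Consequently a pair of pairwise disjoint spreads in $\cG_q(m,j)$ yields a pair of pairwise disjoint $(j-1)$-spreads in PG($m-1,q$).

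Next I would apply the Grassmannian corollary with the substitution $m = n+1$ and $j = k+1$: to produce $k$-spreads in PG($n,q$) one works with $(k+1)$-dimensional subspaces of $\F_q^{n+1}$, that is, in $\cG_q(n+1,k+1)$. The hypotheses of the Grassmannian result become $m > j$, i.e. $n+1 > k+1$, equivalently $n > k$, and $j \mid m$, i.e. $k+1 \mid n+1$ --- precisely the hypotheses assumed here. The corollary then supplies two pairwise disjoint spreads in $\cG_q(n+1,k+1)$, which under the dictionary are two pairwise disjoint $k$-spreads in PG($n,q$), completing the argument.

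The only point requiring care --- and hence the sole obstacle --- is the bookkeeping of the index shift: one must consistently send dimension $j$ in the Grassmannian to dimension $j-1$ in PG and ambient dimension $m$ to $m-1$, so that the divisibility condition $k \mid n$ for $\cG_q(n,k)$ becomes $(k+1) \mid (n+1)$ for $k$-spreads in PG($n,q$), and the strict inequality is preserved under $m=n+1$, $j=k+1$. Once this correspondence is stated cleanly, the statement follows with no further computation.
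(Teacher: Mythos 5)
Your proposal is correct and is essentially the paper's own (implicit) argument: the paper likewise obtains this corollary as an immediate translation of the preceding Grassmannian corollary via the dictionary of Section~\ref{sec:conf}, under which spreads in $\cG_q(n+1,k+1)$ are exactly $k$-spreads in PG($n,q$) and the hypotheses $k+1 \mid n+1$, $n>k$ correspond to $(k+1) \mid (n+1)$ and $n+1 > k+1$. Your explicit bookkeeping of the index shift is the only content needed, and you have it right.
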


\section{Conclusions and problems for future research}
\label{sec:conclusion}

Finding a $k$-parallelism in PG($n,q$) is an extremely difficult
problem. If $k > 1$ then only one such parallelism is known. The goal
of this paper was to direct the research for
the following slightly easier question. What is the maximum number of
pairwise disjoint $k$-spreads in PG($n,q$)? This number can be greater than one
only if $n >k$ and
$k+1$ divides $n+1$ which is the sufficient and necessary
condition for the existence of $k$-spreads in PG($n,q$).
We proved that two such pairwise disjoint $k$-spreads
always exist. If $q=2$ then we proved the existence
of $2^{k+1}-1$ pairwise disjoint $k$-spreads. We also proved
that if $k+1$ divides $n_1+1$ and $n_2+1$, and $n_1 > n_2 >k$,
then there exist a $k$-parallelism in the partial space of
dimension $n_1$ from which an $n_2$-subspace was removed.
There are many interesting open problems in this topic.
We will state them in an increasing order of difficulty
by our opinion, from the
easiest one to the most difficult one.

\begin{enumerate}
\item
For any $q >2$ and $k \geq 1$, improve the lower bounds on the number of pairwise disjoint
$k$-spreads in PG($n,q$).

\item
For any $k > 1$, improve the lower bounds on the number of pairwise disjoint
$k$-spreads in PG($n,2$).

\item
Find nontrivial necessary conditions for the existence of a parallelism in
$\cG_q(n_1,n_2,k)$.

\item
Find new parameters for which there exists a parallelism in
$\cG_q(n_1,n_2,k)$.

\item
For a power of a prime $q>2$, find new parameters for which there
exists a 1-parallelism in PG($n,q$).

\item
For $k>1$ and any power of a prime $q$, starting
with $q=2$, find new parameters for which there
exists a $k$-parallelism in PG($n,q$).

\item
For and $q$ and $k>1$, find an infinite family of $k$-parallelisms in PG($n,q$).

\end{enumerate}



\end{document}